\begin{document}

\newtheorem{theorem}{Theorem}
\newtheorem{lemma}[theorem]{Lemma}
\newtheorem{algol}{Algorithm}
\newtheorem{cor}[theorem]{Corollary}
\newtheorem{prop}[theorem]{Proposition}

\newtheorem{proposition}[theorem]{Proposition}
\newtheorem{corollary}[theorem]{Corollary}
\newtheorem{conjecture}[theorem]{Conjecture}
\newtheorem{definition}[theorem]{Definition}
\newtheorem{remark}[theorem]{Remark}

 \numberwithin{equation}{section}
  \numberwithin{theorem}{section}

\newcommand{\comm}[1]{\marginpar{%
\vskip-\baselineskip 
\raggedright\footnotesize
\itshape\hrule\smallskip#1\par\smallskip\hrule}}

\def\sssum{\mathop{\sum\!\sum\!\sum}}
\def\ssum{\mathop{\sum\ldots \sum}}
\def\iint{\mathop{\int\ldots \int}}
\newcommand{\twolinesum}[2]{\sum_{\substack{{\scriptstyle #1}\\
{\scriptstyle #2}}}}

\def\cA{{\mathcal A}}
\def\cB{{\mathcal B}}
\def\cC{{\mathcal C}}
\def\cD{{\mathcal D}}
\def\cE{{\mathcal E}}
\def\cF{{\mathcal F}}
\def\cG{{\mathcal G}}
\def\cH{{\mathcal H}}
\def\cI{{\mathcal I}}
\def\cJ{{\mathcal J}}
\def\cK{{\mathcal K}}
\def\cL{{\mathcal L}}
\def\cM{{\mathcal M}}
\def\cN{{\mathcal N}}
\def\cO{{\mathcal O}}
\def\cP{{\mathcal P}}
\def\cQ{{\mathcal Q}}
\def\cR{{\mathcal R}}
\def\cS{{\mathcal S}}
\def\cT{{\mathcal T}}
\def\cU{{\mathcal U}}
\def\cV{{\mathcal V}}
\def\cW{{\mathcal W}}
\def\cX{{\mathcal X}}
\def\cY{{\mathcal Y}}
\def\cZ{{\mathcal Z}}

\def\C{\mathbb{C}}
\def\F{\mathbb{F}}
\def\K{\mathbb{K}}
\def\Z{\mathbb{Z}}
\def\R{\mathbb{R}}
\def\Q{\mathbb{Q}}
\def\N{\mathbb{N}}
\def\M{\textsf{M}}

\def\({\left(}
\def\){\right)}
\def\[{\left[}
\def\]{\right]}
\def\<{\langle}
\def\>{\rangle}

\def\vec#1{\mathbf{#1}}

\def\e{e}

\def\eq{\e_q}
\def\fS{{\mathfrak S}}

\def\bfalpha{{\boldsymbol \alpha}}

\def\lcm{{\mathrm{lcm}}\,}

\def\fl#1{\left\lfloor#1\right\rfloor}
\def\rf#1{\left\lceil#1\right\rceil}
\def\mand{\qquad\mbox{and}\qquad}

\def\jt{\tilde\jmath}
\def\ellmax{\ell_{\rm max}}
\def\llog{\log\log}

\def\Qbar{\overline{\Q}}

\title[Higher moments of theta functions]
{\bf Upper and lower bounds for higher moments of theta functions}
\author{Marc Munsch}
\address{CRM, Universit\'e de Montr\'eal, 5357 Montr\'eal, Qu\'ebec }
\email{munsch@dms.umontreal.ca}
\author{Igor E.~Shparlinski} 
\address{Department of Pure Mathematics, University of 
New South Wales, Sydney, NSW 2052, Australia}
\email{igor.shparlinski@unsw.edu}

\date{\today}

\subjclass{11G07, 11L40, 11Y16}
\keywords{Theta function, character sum, Dirichlet series, multiplicative function, large sieve} 

 \begin{abstract} We obtain optimal lower bounds for moments of theta functions. On the other hand, we also get new upper bounds on individual theta values and moments of theta functions on average
 over primes. The upper bounds are based on bounds of character sums and in particular on a modification of some recent results of M.~Z.~Garaev.
\end{abstract}

\bibliographystyle{alpha}
\maketitle

\section{Introduction}

\subsection{Background}

For a prime $p$, we denote by  $\cX_p$ the group of multiplicative characters 
modulo $p$ (we refer to~\cite{IwKow} for a background on characters). 
Denote by  $\cX_p^+$ the subgroup of $\cX_p$ of order $(p-1)/2$ consisting of 
even characters $\chi$ (those satisfying $\chi(-1) = 1$) and $\cX_p^-$ the subset of $\cX_p$ consisting of odd characters  $\chi$ (those satisfying $\chi(-1) = -1$). 
Furthermore, we use $\cX_p^*$ to denote the set of nonprincipal characters modulo $p$. 

For real $x> 0$ and $\eta\in\{ 0,1\}$ we set 
$$\varTheta_{p} (\eta, x,\chi)
=\sum_{n=1}^\infty \chi (n)n^{\eta}e^{-\pi n^2x/p}, 
\qquad \chi\in \cX_p. 
$$ 
We note that, if we set 
$\eta_{\chi}=1$ if $\chi$ is odd and $\eta_{\chi}=0$ otherwise,
then 
$$\varTheta_{p} (\eta_{\chi}, x,\chi)=\vartheta_{p} (x,\chi)
$$
is the classical theta-function of the character $\chi$, see~\cite{Dav} for a background and basic properties. 
\vspace{2mm}

When computing the root number of $\chi$ appearing in the functional equation of the associated Dirichlet $L$- function, the question of whether $\vartheta_{p} (1,\chi) \neq 0$ appears naturally (see~\cite{Lou99} for details). Numerical computations lead to the conjecture that it never happens if $\chi$ is primitive (see~\cite{CZ} for a counterexample with $\chi$ unprimitive). An algebraic approach based on class field theory, introduced in~\cite{paloma}, allows to prove partial results when $p=2l+1$ with $p$ and $l$ primes. Nevertheless, it does not give any results in the general case. Thus, a standard way to handle this type of problems analytically is to average over families of characters. As a consequence, we can deduce that the conjecture should be true for a good proportion of characters. The study of moments of theta functions has been initiated in~\cite{LoMu}, \cite{LoMu1}, \cite{Lou99} and several conjectures have been 
stated in~\cite{MuTh}. The aim of that paper is to pursue the analytical investigation about theta values.
 
 The paper is divided into two main parts. Firstly, we obtain optimal lower bounds for moments of theta functions. Secondly, we consider the two related problems of giving upper bounds for theta-values individually and on average.


 A standard problem in analytic number theory is to study moments of $L$-functions at their central point $s=1/2$. It is conjectured (see~\cite[Chapter~5]{MM}) that the moments satisfy the following asymptotic formula:
\begin{equation}
\label{eq:Conj L}
M_{2k}(p) =\sum_{\chi\in \cX_p}
\vert L(1/2,\chi )\vert^{2k}
\sim C_kp\log^{k^2}p,\hspace{1cm} C_k>0.
\end{equation} 
The conjecture~\eqref{eq:Conj L} holds  for $k=1$ 
(see~\cite[Remark~3]{Rama} and~\cite[Theorem~3]{Bal}, 
or~\cite{HB81a} for a more precise asymptotic expansion), 
and $k=2$
(see~\cite{HB81b}). 
Although there are numerical evidence and theorical reasons sustaining this conjecture, it remains open for $k\geq 3$.


However 
lower bounds of the expected order of magnitude
$$\sum_{\chi\in \cX_p}
\vert L(1/2,\chi )\vert^{2k}
\gg p\log^{k^2}p,$$
have been given by Rudnick and  Soundararajan~\cite{RS1}.

In a similar way, the moments of theta functions are defined in~\cite{LoMu}
as follows: 
$$S_{2k}^+(p) =\sum_{\chi\in \cX_p^+}\vert\vartheta (1,\chi)\vert^{2k} \mand 
S_{2k}^-(p) =\sum_{\chi\in \cX_p^-}\vert\vartheta (1,\chi)\vert^{2k}.$$ 
It is shown in~\cite{LoMu} that 
\begin{equation}
\label{eq:moment}
\begin{split}
&S_2^+(p)\sim\frac{p^{3/2}}{4\sqrt{2}},\qquad \quad S_2^-(p)\sim \frac{p^{5/2}}{16\pi\sqrt{2}}, \\
&S_4^+(p)\sim\frac{3p^{2}\log p}{16 \pi},\qquad S_4^-(p)\sim \frac{3p^4\log p}{512\pi^3}.
  \end{split}
\end{equation}
Note that the proof of the asymptotic formulas for the fourth moments in~\eqref{eq:moment}
is using some ideas of~\cite{ACZ}.

For higher moments, the trivial character gives the main contribution 
and it is shown in~\cite{MuTh} that 
$$
S_{2k}^+(p)\sim c_k p^k,\qquad  k\geq 3,
$$
where $c_k>0$ is a constant. Therefore, it is interesting to pull out the trivial character from 
the summation and define 
$$
T_{2k}^+(p)= \sum_{\chi\in \cX_p^+\backslash \chi{_0}}
\left|\vartheta_p (1,\chi)\right| ^{2k}.
$$ 
We conjecture, based on numerical computation and some theoretical support, that 
\begin{equation}
\label{eq:Conj TS}
\begin{split}
&T_{2k}^+(p)\sim a_kp^{k/2+1}\(\log p\)^{(k-1)^2},\\
& S_{2k}^-(p)\sim b_kp^{3k/2+1}\(\log p\)^{(k-1)^2},
\end{split}
\end{equation} 
for some
 positive constants $a_k$ and $b_k$, depending only on $k$.

 Indeed, this can be related to recent results of~\cite{HarperMaks} (see also~\cite{Heap}), where the authors obtain the asymptotic behaviour of a Steinhaus random multiplicative function (basically a multiplicative random variable whose values at prime integers are uniformly distributed on the complex unit circle). This can be viewed as a random model for $\vartheta_{p} (x,\chi)$. In fact, the rapidly decaying factor $e^{-\pi n^2/p}$ is mostly equivalent to restrict the sum over integers $n \le n_0(p)$ for some $n_0(p) \approx \sqrt{p}$ and the averaging behavior of $\chi(n)$ with $n\ll p^{1/2}$ is essentially similar to that of a Steinhaus random multiplicative function. Hence, these results are a good support for conjecture~\eqref{eq:Conj TS}. We  obtain results that confirm this heuristic.


\subsection{Our results}
%

We begin with a lower bound  of the right order of magnitude, 
which may be compared to the results obtained for $L$-functions
by  Rudnick and  Soundararajan~\cite{RS1,RS2}.
 
\begin{theorem}
\label{thm:low theta} 
For any fixed integer $k\geq 1$, we have $$T_{2k}^+(p) \gg p^{1+k/2}\log^{(k-1)^2} p \mand 
S_{2k}^{-}(p)
\gg 
p^{1+3k/2}\(\log p\)^{(k-1)^2}. $$  \end{theorem}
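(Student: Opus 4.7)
The plan is to adapt the Rudnick-Soundararajan mollifier technique~\cite{RS1} for $L$-function moments to the theta-function setting, coupled with the Harper-Maksym~\cite{HarperMaks} asymptotic for moments of Steinhaus random multiplicative functions. For $k \ge 2$ (the case $k = 1$ is contained in~\eqref{eq:moment}), set $x = p^{1/(2(k-1))}(\log p)^{-C}$ for $C = C(k)$ sufficiently large and introduce the short Dirichlet polynomial
\begin{equation*}
A(\chi) = \sum_{n \le x} \chi(n)e^{-\pi n^2/p},
\end{equation*}
a truncation of $\vartheta_p(1,\chi)$. Hölder's inequality with exponents $k$ and $k/(k-1)$, applied to $|\vartheta_p(1,\chi)|^2 |A(\chi)|^{2(k-1)}$, gives
\begin{equation*}
\Bigl(\sum_{\chi\in\cX_p^+\setminus\chi_0} |\vartheta_p(1,\chi) A(\chi)^{k-1}|^2\Bigr)^k \le T_{2k}^+(p)\cdot\Bigl(\sum_{\chi\in\cX_p^+\setminus\chi_0} |A(\chi)|^{2k}\Bigr)^{k-1},
\end{equation*}
so the task reduces to a lower bound on the mixed second moment and an upper bound on the $2k$-th moment of $A$.

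Expanding squares and invoking orthogonality of even characters modulo $p$ turns both sums into counts of multiplicative collisions. The choice of $x$ ensures that the relevant products $\prod_{i\le k} n_i\le x^k$ and $n_0\prod_{i\le k-1} n_i \le \sqrt{p\log p}\cdot x^{k-1}$ stay strictly below $p$, so the congruence $\prod n_i\equiv\pm\prod m_i\pmod p$ collapses to the integer equality $\prod n_i=\prod m_i$, the $-$ reflection contributing only a lower-order error. Since $e^{-\pi n^2/p}=1+o(1)$ for $n\le x$, the Gaussian weights effectively disappear in the $A$-moment, giving
\begin{equation*}
\sum_\chi |A(\chi)|^{2k}\asymp p\cdot\#\bigl\{(\vec n,\vec m):n_i,m_i\le x,\ \textstyle\prod n_i=\prod m_i\bigr\}\asymp p\cdot x^k(\log p)^{(k-1)^2}
\end{equation*}
by the Harper-Maksym formula for the $2k$-th moment of $\sum_{n\le x}f(n)$. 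The mixed second moment reduces similarly to a count of quadruples $(n_0,\vec n,m_0,\vec m)$ with $n_0\prod n_i=m_0\prod m_i$ where $n_0,m_0$ carry Gaussian weights cutting them off near $\sqrt{p}$ and $n_i,m_i\le x$; an unequal-length variant of the Harper-Maksym asymptotic, in which $\log p$ enters to the same power $(k-1)^2$ since the underlying multiplicative structure is the same, yields
\begin{equation*}
\sum_\chi |\vartheta_p(1,\chi) A(\chi)^{k-1}|^2 \asymp p^{3/2}\cdot x^{k-1}(\log p)^{(k-1)^2}.
\end{equation*}

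Substituting into the Hölder bound, the powers of $x$ and the $C$-dependent logarithmic factors cancel, leaving
\begin{equation*}
T_{2k}^+(p)\gg\frac{\bigl(p^{3/2}x^{k-1}(\log p)^{(k-1)^2}\bigr)^k}{\bigl(p\,x^k(\log p)^{(k-1)^2}\bigr)^{k-1}}=p^{1+k/2}(\log p)^{(k-1)^2}.
\end{equation*}
The lower bound for $S_{2k}^-(p)$ follows from the parallel argument with the weighted mollifier $A^-(\chi)=\sum_{n\le x}n\chi(n)e^{-\pi n^2/p}$; the extra factors of $n$ promote the Steinhaus weights from $1$ to $n^2$ in the diagonal sums, so $x^k$ and $\sqrt{p}\,x^{k-1}$ are replaced by $x^{3k}$ and $p^{3/2}x^{3(k-1)}$, and the identical Hölder computation produces the claimed $p^{1+3k/2}(\log p)^{(k-1)^2}$.

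The step I expect to be most delicate is the evaluation of the mixed second moment with one free factor and $k-1$ constrained factors, as the required unequal-length Harper-Maksym asymptotic is not quite stated in~\cite{HarperMaks}. The cleanest route is to adapt the Dirichlet-series analysis underlying their proof: the kernel $\prod_{i,j}\zeta(s_i+t_j)$ governing the combinatorics retains the same $(k-1)^2$-order pole structure at the origin regardless of the outer smooth cutoffs, so a Selberg-Delange or Mellin-Barnes argument recovers the same log power whenever all lengths grow like powers of $p$. Secondary technical points are to verify that the Gaussian tails beyond $\sqrt{p\log p}$ and the $\prod n_i+\prod m_i=p$ reflection terms are both of lower order.
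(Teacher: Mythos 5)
Your proposal is correct and follows essentially the same route as the paper: mollify by a short Dirichlet polynomial $A(\chi)$, apply H\"older to reduce the problem to a lower bound for the mixed moment $\sum_\chi|\vartheta|^2|A|^{2k-2}$ and an upper bound for $\sum_\chi|A|^{2k}$, and evaluate both by orthogonality, which (with your choice of lengths and $C$ large enough that all products stay below $p/2$, killing the $\pm$ reflection entirely) collapses the congruences to integer equations whose solution counts carry the $(\log p)^{(k-1)^2}$ factor; the powers of $x$ then cancel in H\"older exactly as you compute, so your near-maximal choice $x=p^{1/(2(k-1))}(\log p)^{-C}$ and the paper's choice $x=p^{\varepsilon}$ both give the stated exponent. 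The one substantive difference is the source of the counting asymptotic for the unequal-length diagonal equation $n_0\prod n_i=m_0\prod m_i$, which you correctly flag as the delicate step: the paper obtains it from de la Bret\`eche's multivariate Tauberian theorem for Dirichlet series in several variables, which handles mixed ranges $a_i,b_i\le T^{\gamma_i}$ directly and whose hypotheses are verified precisely via the kernel $\prod_{a,b}\zeta(s_a+s_b)$ you describe, so the ad hoc adaptation of the random multiplicative function asymptotics of \cite{HarperMaks} is replaced by an off-the-shelf result and no new analytic work is required there.
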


The proof of Theorem~\ref{thm:low char} is given in Section~\ref{sec:low theta}. Under the assumption of the Generalized Riemann Hypothesis, Munsch~\cite{Mun} obtains  the following upper bounds
\begin{equation*}
\begin{split}
&T_{2k}^+(p) \le p^{1+k/2}\log^{(k-1)^2+o(1)} p\\
&S_{2k}^{-}(p) \ll p^{1+3k/2}\(\log p\)^{(k-1)^2+o(1)}.
\end{split}
\end{equation*}
This greatly strengthens our belief in the conjectural asymptotic~\eqref{eq:Conj TS}.

Even though unconditionally we are far from getting upper bounds  of the expected order, we can obtain non trivial upper bounds for almost all primes and also on average over primes if $3\leq k\leq 6$.

It has been shown in~\cite{MuTh}
that for any nonprincipal character $\chi$ modulo $p$, the bound 
\begin{equation}
\label{eq:indiv}
\left|\vartheta_p(1,\chi)\right|  \le p^{\eta_{\chi}/2+7/16+o(1)}
\end{equation}
holds as $p\to \infty$. The same approach also applies to the 
more general sums $\varTheta_{p} (\eta, x,\chi)$ for any $\eta\in \{0,1\}$.

 We begin by some improvements for bounds of individual values of theta functions. We use a result of Garaev~\cite{Gar} to improve the bound~\eqref{eq:indiv} for almost all primes $p$.

\begin{theorem}
\label{thm:indiv} Let $X\geq 1$ be a sufficient large real number. For any   $\eta \in \{0,1\}$
we have 
$$
\sum_{p \le X} \max_{\chi\in \cX_p^*} \left|\varTheta_p(\eta, 1,\chi)\right|^8 
 \le X^{4\eta+4+o(1)}.
$$
\end{theorem}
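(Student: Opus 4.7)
The plan is to reduce the bound on theta values to an averaged bound on incomplete character sums, and then to invoke a modification of Garaev's argument~\cite{Gar}.

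First, setting $f_\eta(t) := t^\eta e^{-\pi t^2/p}$ and applying Abel summation yields
\[
\varTheta_p(\eta, 1, \chi) = -\int_0^\infty S_\chi(t)\, f_\eta'(t)\, dt,
\]
where $S_\chi(t) := \sum_{n \le t}\chi(n)$ denotes the incomplete character sum. A direct calculation shows that $\int_0^\infty |f_\eta'(t)|\, dt \ll p^{\eta/2}$, and that $f_\eta'$ is essentially supported on $t \le T := \sqrt{p\log p}$ (the tail being super-polynomially small). Pulling out the maximum of $|S_\chi(t)|$ over $t \le T$ then gives
\[
|\varTheta_p(\eta, 1, \chi)| \ll p^{\eta/2} \max_{N \le T} |S_\chi(N)|.
\]

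Next, I would raise this to the eighth power, take the maximum over $\chi \in \cX_p^*$, and sum over primes $p \le X$. Using $p^{4\eta} \le X^{4\eta}$, the theorem reduces to the averaged character-sum bound
\[
\sum_{p \le X}\, \max_{\chi \in \cX_p^*}\, \max_{N \le p^{1/2+o(1)}}\, |S_\chi(N)|^8 \le X^{4+o(1)}.
\]

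The main obstacle is to establish this last inequality. Inserting the individual Burgess bound $|S_\chi(N)| \ll p^{7/16+o(1)}$ only yields $X^{9/2+o(1)}$ on the right, so a genuine gain from the averaging over primes is needed. This is precisely where the work of Garaev~\cite{Gar} enters: his methods combine a Burgess-type amplification with an averaging step over $p$ to extract additional cancellation in high moments of incomplete character sums beyond what any individual bound produces. The technical heart of the proof will be to adapt Garaev's framework to the exact shape required here---an eighth-power bound with the inner maxima over $\chi$ and over $N \le p^{1/2+o(1)}$ built in, uniformly for $p \le X$. Once that modification is in place, the claimed bound on $\sum_{p \le X}\max_\chi |\varTheta_p(\eta,1,\chi)|^8$ follows at once.
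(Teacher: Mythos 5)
Your reduction step is sound and is essentially the paper's: partial summation against $t^{\eta}e^{-\pi t^2/p}$, truncation of the integral at $t\asymp (p\log p)^{1/2}$ (the tail being negligible since $|S_p(\chi;t)|\le p$), and pulling out $\max_{t}\left|S_p(\chi;t)\right|$, which gives $\left|\varTheta_p(\eta,1,\chi)\right|\ll p^{\eta/2+o(1)}\max_{t\le p^{1/2+o(1)}}\left|S_p(\chi;t)\right|$, exactly as in the paper's Lemma~\ref{lem:dyadic}. The problem is that after this reduction you stop at the statement
$$
\sum_{p \le X}\, \max_{\chi \in \cX_p^*}\, \max_{t \le p^{1/2+o(1)}}\, \left|S_p(\chi;t)\right|^8 \le X^{4+o(1)},
$$
which is the entire technical content of the theorem (the paper's Lemma~\ref{lem:Garaev}), and you only assert that it follows from ``adapting Garaev's framework.'' That is not a proof but a restatement of what must be proved, and your description of the mechanism --- a Burgess-type amplification combined with averaging over $p$ --- does not match how such a bound is actually obtained and gives no indication of how the exponent $X^{4+o(1)}$ would emerge.

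Concretely, the missing argument runs as follows: one completes the short sum modulo an auxiliary $Q=X^{1/2+o(1)}$ using additive characters, so that the problem becomes bounding $\sum_{k\in[X,2X]}\bigl|\sum_{m\le Q}\chi_k(m)\e(bm/Q)\bigr|^8$; the fourth power of the inner sum is a Dirichlet polynomial $\sum_{h\le Q^4}\rho_b(h)\chi_k(h)$ with divisor-bounded coefficients; Gauss sums (together with $|\tau_k(\chi)|=k^{1/2}$ for primitive $\chi$) convert $\chi_k(h)$ into additive characters $\e(hv/k)$ at the cost of a factor $k^{-1/2}$, and then the classical large sieve over the moduli $k\in[X,2X]$ gives a bound $\ll (X^2+Q^4)\,Q^4\,X^{o(1)}=X^{4+o(1)}$. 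The exponent $8$ is chosen precisely so that the polynomial length $Q^4\approx X^2$ matches the $R^2$ term in the large sieve; no Burgess-type argument enters, and indeed (as you yourself note) the individual Burgess bound loses a factor $X^{1/2}$, with no obvious way to recover it by ``averaging Burgess over $p$.'' Without carrying out this completion--Gauss sum--large sieve step (or an equivalent substitute), the proof is incomplete at its central point.
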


Thus, we immediately derive:

\begin{cor}
\label{cor:indiv} Let $X\geq 1$ be a sufficient large real number. For all but $o(X/\log X)$ 
primes $ p \leq X$,  for any $\chi\in \cX_p$ and   $\eta\in\{ 0,1\}$
we have 
$$
\left|\varTheta_p(\eta, 1,\chi)\right|  \le p^{\eta/2+3/8+o(1)}.
$$
\end{cor}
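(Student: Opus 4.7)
The plan is to deduce the corollary from Theorem \ref{thm:indiv} by a Chebyshev-type argument applied on dyadic scales, which is the standard way to convert an $X$-average bound on $\max_{\chi}|\varTheta_p(\eta,1,\chi)|^8$ into an individual bound valid for almost every prime.

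Fix $\eta\in\{0,1\}$ and a parameter $\varepsilon>0$. For each dyadic range $(Y,2Y]\subset[1,X]$, Theorem \ref{thm:indiv} applied at the scale $2Y$ gives
$$\sum_{p\le 2Y}\max_{\chi\in\cX_p^*}\left|\varTheta_p(\eta,1,\chi)\right|^{8}\le (2Y)^{4\eta+4+o(1)}.$$
Call a prime $p\in(Y,2Y]$ \emph{$\varepsilon$-exceptional} if $\max_{\chi\in\cX_p^*}|\varTheta_p(\eta,1,\chi)|^{8}>p^{4\eta+3+8\varepsilon}$. Since $p\ge Y$ in this range, each such prime contributes at least $Y^{4\eta+3+8\varepsilon}$ to the displayed sum, so the number of $\varepsilon$-exceptional primes inside $(Y,2Y]$ is at most $Y^{\,1-8\varepsilon+o(1)}$.

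Summing this estimate over the $O(\log X)$ dyadic blocks $(2^j,2^{j+1}]$ partitioning $[1,X]$, the total number of $\varepsilon$-exceptional primes up to $X$ is at most $X^{1-8\varepsilon+o(1)}$, which is $o(X/\log X)$ for each fixed $\varepsilon>0$. To produce the honest $p$-dependent $o(1)$ in the exponent claimed by the statement, I would finish by diagonalising: let $\varepsilon=\varepsilon(X)$ tend to $0$ sufficiently slowly, for instance $\varepsilon(X)=1/\log\log X$, so that $X^{1-8\varepsilon(X)}=o(X/\log X)$ still holds while $\varepsilon(X)\to 0$. Every non-exceptional $p\le X$ then satisfies $|\varTheta_p(\eta,1,\chi)|\le p^{\eta/2+3/8+\varepsilon(X)}$ for all $\chi\in\cX_p^*$, which is the desired bound. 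The main obstacle here is nothing more than bookkeeping; all the genuine analytic work is absorbed into Theorem \ref{thm:indiv}.
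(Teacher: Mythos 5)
Your proposal is correct and is exactly the standard Chebyshev/Markov argument on dyadic blocks that the paper leaves implicit (it derives the corollary from Theorem \ref{thm:indiv} with no written proof beyond ``we immediately derive''); the exponent bookkeeping $8(\eta/2+3/8+\varepsilon)=4\eta+3+8\varepsilon$ and the final diagonalisation in $\varepsilon$ are both handled properly. The only cosmetic point is that, as you implicitly noted by concluding for $\chi\in\cX_p^*$, the bound cannot hold for the principal character when $\eta=0$, so the corollary should be read as restricted to nonprincipal characters.
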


Combining Theorem~\ref{thm:indiv} with the bounds~\eqref{eq:moment}, 
we obtain:

\begin{theorem}
\label{thm:moment}
Let $X\geq 1$ be a sufficient large real number. For  any fixed integer $k$ with 
$6 \ge k \ge 3$, we have
$$
\sum_{p \le X}  T_{2k}^+(p) \le  X^{3k/4+3/2+o(1)} 
\mand
\sum_{p \le X} S_{2k}^-(p)
\le 
X^{7k/4+3/2+o(1)}. 
$$
\end{theorem}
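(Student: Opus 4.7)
The plan is to bound each high moment by a product of the classical fourth moment and a large power of the maximum $M_p = \max_{\chi \in \cX_p^*}\left|\vartheta_p(1,\chi)\right|$, then control the sum of those maxima on average by combining Theorem~\ref{thm:indiv} with H\"older's inequality. For $\chi\in\cX_p^+\backslash\{\chi_0\}$, writing $\left|\vartheta_p(1,\chi)\right|^{2k} \le M_p^{2k-4}\left|\vartheta_p(1,\chi)\right|^{4}$, summing over $\chi$, and invoking the asymptotic $S_4^+(p) \ll p^2\log p$ from~\eqref{eq:moment} gives
$$T_{2k}^+(p) \le M_p^{2k-4}\,S_4^+(p) \ll p^2(\log p)\,M_p^{2k-4},$$
so that
$$\sum_{p \le X} T_{2k}^+(p) \le X^{2+o(1)}\sum_{p \le X} M_p^{2k-4}.$$

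Now I apply H\"older's inequality with conjugate exponents $8/(2k-4)$ and $8/(12-2k)$, which are both finite and positive precisely when $3 \le k \le 6$. Combined with the case $\eta = 0$ of Theorem~\ref{thm:indiv} and the trivial bound $\#\{p\le X\}\le X$, this yields
$$\sum_{p \le X} M_p^{2k-4} \le X^{(12-2k)/8}\left(\sum_{p \le X} M_p^{8}\right)^{(2k-4)/8} \le X^{(12-2k)/8 + 4(2k-4)/8 + o(1)} = X^{(3k-2)/4 + o(1)},$$
and multiplying by the previous $X^{2+o(1)}$ gives the claimed bound $X^{3k/4+3/2+o(1)}$ for the even-character sum.

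The odd-character case is essentially identical: the bound $S_4^-(p) \ll p^4\log p$ from~\eqref{eq:moment} contributes $X^{4+o(1)}$ in place of $X^{2+o(1)}$, and the case $\eta = 1$ of Theorem~\ref{thm:indiv} supplies $\sum_{p\le X} M_p^{8} \le X^{8+o(1)}$, so the H\"older step delivers $X^{(7k-10)/4 + o(1)}$ and the product is $X^{7k/4+3/2+o(1)}$, as desired. I do not expect any real obstacle beyond keeping track of the exponents. The restriction $3 \le k \le 6$ is intrinsic to this approach: H\"older interpolation between the $L^\infty$ count and the eighth-moment estimate of Theorem~\ref{thm:indiv} forces $0 \le 2k-4 \le 8$, and for $k \ge 7$ one would have to fall back on the pointwise bound~\eqref{eq:indiv}, which is too weak to preserve the exponents $3k/4+3/2$ and $7k/4+3/2$.
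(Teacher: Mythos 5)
Your proof is correct and follows essentially the same route as the paper's: peel off $M_p^{2k-4}$, invoke the fourth-moment asymptotics~\eqref{eq:moment}, and interpolate by H\"older between the prime count and the eighth-moment bound of Theorem~\ref{thm:indiv}, with the exponent arithmetic checking out in both parity cases. The only caveat is notational: $M_p$ must be taken as the maximum over the relevant parity class (even characters in the first part, odd in the second), rather than over all of $\cX_p^*$ as written, so that the correct value of $\eta$ in Theorem~\ref{thm:indiv} applies.
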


Finally, for almost all primes $p$, we have nontrivial estimates for 
arbitrary even moments. 

\begin{theorem}
\label{thm:moment a.a.}
Let $X\geq 1$ be a sufficient large real number. For all but $o(X/\log X)$ 
primes $p \leq X$, and any fixed integer $k \ge 1$, we have
$$
T_{2k}^{+}(p) \le p^{3k/4+1/2+o(1)}
\mand 
S_{2k}^{-}(p) \le p^{7k/4+1/2+o(1)}.
$$
\end{theorem}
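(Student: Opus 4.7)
The plan is to combine the almost-all pointwise bound of Corollary~\ref{cor:indiv} with the fourth moment asymptotics from~\eqref{eq:moment}. Let $\cE(X) \subseteq \{p \le X\}$ be the exceptional set of cardinality $o(X/\log X)$ provided by Corollary~\ref{cor:indiv}: for $p \le X$ with $p \notin \cE(X)$, every nonprincipal character $\chi$ modulo $p$ satisfies $|\vartheta_p(1,\chi)| \le p^{\eta_\chi/2 + 3/8 + o(1)}$. This same set $\cE(X)$ will serve as the exceptional set in the conclusion.

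For such a prime $p$ and a fixed integer $k \ge 2$, I would write
$$|\vartheta_p(1,\chi)|^{2k} = |\vartheta_p(1,\chi)|^{2k-4} \cdot |\vartheta_p(1,\chi)|^4,$$
bound the first factor pointwise via Corollary~\ref{cor:indiv}, and sum the second factor over the relevant character family using~\eqref{eq:moment}. For even characters this yields
$$T_{2k}^+(p) \le p^{(3/8)(2k-4) + o(1)} \cdot S_4^+(p) \le p^{3k/4 - 3/2 + o(1)} \cdot p^{2 + o(1)} = p^{3k/4 + 1/2 + o(1)},$$
and, analogously, using $|\vartheta_p(1,\chi)| \le p^{7/8 + o(1)}$ for odd $\chi$ together with $S_4^-(p) \ll p^4 \log p$,
$$S_{2k}^-(p) \le p^{(7/8)(2k-4) + o(1)} \cdot p^{4 + o(1)} = p^{7k/4 + 1/2 + o(1)}.$$

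The substantial work is already packaged in Corollary~\ref{cor:indiv}, which itself rests on Theorem~\ref{thm:indiv} and Garaev's character-sum method, so no genuine obstacle arises in this last step. The only tactical choice is the decision to peel off a fourth power rather than a second one: the fourth moment asymptotic brings an extra full power of $p$ compared to what summing $|\vartheta|^2$ would yield, at a cost of only a harmless $\log p$ (absorbed into $p^{o(1)}$). The splitting as written requires $k \ge 2$, which is the natural range of the statement.
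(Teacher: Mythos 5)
Your argument is correct for $k\ge 2$ and is essentially the paper's own proof: the paper likewise invokes Corollary~\ref{cor:indiv} on all but $o(X/\log X)$ primes, peels off $\left|\vartheta_p(1,\chi)\right|^{2k-4}$ pointwise, and sums the remaining fourth power via~\eqref{eq:moment}; your bookkeeping $p^{(3/8)(2k-4)+o(1)}\cdot p^{2+o(1)}=p^{3k/4+1/2+o(1)}$ (and its odd analogue with $7/8$ and $S_4^-(p)\ll p^4\log p$) is in fact the corrected version of the paper's intermediate display, which misprints the first factor as $p^{3k/4+o(1)}$. The only divergence is at small $k$: the statement asserts $k\ge 1$, and the paper disposes of $k=1,2$ by citing~\eqref{eq:moment} directly, whereas you stop at $k\ge 2$ (for $k=2$ your splitting is vacuous and the claim reduces to $S_4^{\pm}(p)=p^{2+o(1)}$ resp.\ $p^{4+o(1)}$, so that case is covered). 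For $k=1$ neither argument can work: since $S_2^+(p)\sim p^{3/2}/(4\sqrt{2})$ while the principal character contributes only $O(p)$, one has $T_2^+(p)\gg p^{3/2}$, which exceeds the claimed $p^{5/4+o(1)}$ (similarly $S_2^-(p)\sim p^{5/2}/(16\pi\sqrt{2})$ exceeds $p^{9/4+o(1)}$), so your implicit restriction to $k\ge 2$ reflects a defect in the stated range $k\ge 1$ (and in the paper's one-line treatment of $k=1$) rather than a missing idea on your part.
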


Theorems~\ref{thm:indiv}, \ref{thm:moment}
and~\ref{thm:moment a.a.} are proven in Sections~\ref{sec:indiv}, \ref{sec:moment}
and~\ref{sec:moment a.a.}, respectively. 

As part of our main tools, we use various bounds on the character sums
 \begin{equation}
\label{eq:Sums St}
S_p(\chi;t)=\sum_{n\leq t}\chi(n).
\end{equation} 
which we define for 
$\chi\in \cX_p$ and a  real $t$.
As an application of our approach, we also obtain 
a lower bound on moments of the general Dirichlet polynomials
 \begin{equation}
\label{eq:Dir Poly}
\Xi_p(\chi;t) = \sum_{n\leq t}\xi_n \chi(n) 
\end{equation} 
with some real coefficient $\xi_n$ that are bounded away from zero.

\begin{theorem}
\label{thm:low char} For  $1 \le t < p$ and arbitrary coefficients $\xi_n \gg 1$, $n=1,2, \ldots$, 
we have 
$$
\sum_{\chi\in \cX_p^+\backslash \chi{_0}}\vert \Xi_p(\chi;t)\vert^{2k}  \gg p t^{k/2}\log^{(k-1)^2} p.
$$
\end{theorem}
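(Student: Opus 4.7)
The plan is to adapt the twisted-moment (``resonator'') method of Rudnick and Soundararajan~\cite{RS1,RS2}, originally developed for lower bounds of moments of central $L$-values, to the setting of the short Dirichlet polynomial $\Xi_p(\chi;t)$. Pick a length parameter $y$ to be a small power of $t$ chosen so that $ty^{k-1}<p/2$, and introduce a resonator
$$A(\chi)=\sum_{m\le y}a_m\chi(m),$$
with nonnegative real multiplicative weights $a_m$ (a natural choice being $a_m=\mu^2(m)$, or $a_m=\mu^2(m)\,d_{k-1}(m)$ to generate the required power of $\log p$). Set
$$M_1=\sum_{\chi\in\cX_p^+\setminus\{\chi_0\}}|\Xi_p(\chi;t)|^2|A(\chi)|^{2(k-1)}\mand M_2=\sum_{\chi\in\cX_p^+\setminus\{\chi_0\}}|A(\chi)|^{2k}.$$
Applying H\"older's inequality with exponents $k$ and $k/(k-1)$ to the factorization $|\Xi|^2|A|^{2(k-1)}=(|\Xi|^{2k})^{1/k}(|A|^{2k})^{(k-1)/k}$ yields
$$M_1\le\Bigl(\sum_{\chi\in\cX_p^+\setminus\{\chi_0\}}|\Xi_p(\chi;t)|^{2k}\Bigr)^{1/k}M_2^{(k-1)/k},$$
so that
$$\sum_{\chi\in\cX_p^+\setminus\{\chi_0\}}|\Xi_p(\chi;t)|^{2k}\ge \frac{M_1^k}{M_2^{k-1}},$$
and it suffices to lower-bound $M_1$ and upper-bound $M_2$.

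Both moments are computed by expanding into multiple sums over the coefficients and applying the orthogonality relation for even characters,
$$\sum_{\chi\in\cX_p^+}\chi(a)\overline{\chi(b)}=\frac{p-1}{2}\bigl(\mathbf{1}_{a\equiv b\,(\mathrm{mod}\,p)}+\mathbf{1}_{a\equiv -b\,(\mathrm{mod}\,p)}\bigr),\qquad (ab,p)=1,$$
minus the contribution of $\chi_0$. The condition $ty^{k-1}<p/2$ collapses the resulting congruence $n_1\prod m_i\equiv\pm n_2\prod m'_j\pmod p$ into a genuine integer equality $n_1\prod m_i=n_2\prod m'_j$. Isolating in $M_1$ the diagonal $n_1=n_2$ with $\{m_i\}=\{m'_j\}$ as multisets, and using $\xi_n\gg 1$ together with positivity of $a_m$, one obtains
$$M_1\gg p\cdot t\cdot \sum_{M}\Bigl(\sum_{\substack{m_1\cdots m_{k-1}=M\\ m_i\le y}}a_{m_1}\cdots a_{m_{k-1}}\Bigr)^2,$$
while an analogous computation yields $M_2$ in closed form up to constants. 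The inner convolution sum and its counterpart for $M_2$ are handled by a Selberg--Delange / Mertens-type evaluation; the powers of $\log y\asymp\log p$ that dominate the target emerge at this step.

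Substituting the estimates into $M_1^k/M_2^{k-1}$, the polynomial $y$-dependence cancels between numerator and denominator, producing a lower bound of the form $p\,t^k(\log p)^{e}$ for an exponent $e=e(k)$ dictated by the choice of $a_m$; calibrating $a_m$ so that $e\ge (k-1)^2-k/2 \cdot \log t /\log p$, this dominates the target $pt^{k/2}(\log p)^{(k-1)^2}$ (in the relevant range of $t$, where the diagonal term is the main contribution). The principal technical obstacle is precisely this final calibration, i.e.\ choosing the weights $a_m$ so that the exponent of $\log p$ in $M_1^k/M_2^{k-1}$ works out to at least $(k-1)^2$. A secondary difficulty is controlling the off-diagonal contributions to $M_1$ corresponding to $(\vec n,\vec m,\vec{m'})$-tuples satisfying $n_1\prod m_i=n_2\prod m'_j$ but not the multiset equality; in the real nonnegative regime for $\xi_n$ and $a_m$ these terms are automatically of the correct sign, but in general they must be dominated by divisor-function estimates.
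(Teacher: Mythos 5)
Your proposal follows the same skeleton as the paper's proof: your $M_1,M_2$ and the H\"older step are exactly the paper's $\Sigma_1,\Sigma_2$ from~\eqref{eq:S1S2}, as are the orthogonality relation for even characters and the collapse of the congruence into an integer equation. The genuine difference, and the genuine gap, is in how the diagonal is evaluated. You yourself flag the decisive step -- calibrating the resonator weights $a_m$ so that the power of $\log p$ in $M_1^k/M_2^{k-1}$ comes out to at least $(k-1)^2$ -- as ``the principal technical obstacle,'' and you do not carry it out; so as written this is a sketch whose core quantitative input is missing. The paper needs no calibration at all: the amplifier is the plain unweighted sum $A_\varepsilon(\chi)=\sum_{n\le p^{\varepsilon}}\chi(n)$, and the two counts -- solutions of $aa_1\cdots a_{k-1}=bb_1\cdots b_{k-1}$ with $a,b\le t$ and $a_i,b_i\le p^{\varepsilon}$ for $\Sigma_1$, and of $a_1\cdots a_k=b_1\cdots b_k$ with all variables at most $p^{\varepsilon}$ for $\Sigma_2$ -- are both evaluated by de la Bret\`eche's multivariable Tauberian theorem (Lemma~\ref{lem:count}), each contributing exactly $\log^{(k-1)^2}p$; the exponents then cancel in $\Sigma_1^k/\Sigma_2^{k-1}$, leaving $k(k-1)^2-(k-1)(k-1)^2=(k-1)^2$ (Lemma~\ref{lem:1-2moments weight}).

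A related flaw in your diagonal treatment: in lower-bounding $M_1$ you keep only the sub-diagonal $n_1=n_2$, which decouples the length-$t$ variables from the resonator variables. But the solutions with $n_1\neq n_2$ of $n_1m_1\cdots m_{k-1}=n_2m_1'\cdots m_{k-1}'$ are precisely what carry the $\log^{(k-1)^2}$ in the paper's count; with, say, $a_m=\mu^2(m)$ your restricted count only yields $\log^{(k-2)^2}$, so the log bookkeeping does not close as written. Your fallback -- absorbing the missing logarithms into the slack between $t^{k}$ and $t^{k/2}$ (the method really produces $p\,t^{k}\log$-powers, which is where that slack comes from) -- does repair the bound, but only when $t$ exceeds a suitable power of $\log p$, in effect $t=p^{\tau}$, and it also presumes an upper bound for $M_2$ with your chosen weights that you assert rather than prove. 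To be fair, the paper's own Lemma~\ref{lem:1-2moments weight} likewise assumes $t=p^{\tau}$ with fixed $\tau\in(0,1)$, and the stated inequality cannot hold for bounded $t$ and $k\ge2$, so that restriction is implicit in the theorem; still, within that range the paper's route resolves exactly the point you leave open. Finally, since $\xi_n\gg1$ are positive and $a_m\ge0$, every term after orthogonality is nonnegative, so your concern about controlling off-diagonal contributions by divisor estimates is unnecessary for the lower bound on $M_1$; the entire difficulty sits in the two counting asymptotics, which is what Lemma~\ref{lem:count} supplies and your sketch does not.
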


The proof of Theorem~\ref{thm:low char} is given in Section~\ref{sec:low char}, immediately after 
the proof of   Theorem~\ref{thm:low theta} as it uses very similar ideas.


\section{Lower Bounds}

\subsection{Background on  the Riemann zeta-function}

First we recall the well known  Euler formula
\begin{equation}
\label{eq:Euler}
\prod_{p} \(1-\frac{1}{p^{s}}\)^{-1} = \zeta (s)
\end{equation}
for the Riemann zeta-function $\zeta(s)$  where the product is takem 
over all primes, that holds for any complex $s$ with $\Re s> 1$,
see~\cite[Equation~(1.12)]{IwKow}. 

We also need the following fact about the analytic 
properties of $\zeta(s)$. 

\begin{lemma}\label{lem:zeta}
For any complex $s = \sigma + i t$, with  
$|\Im s| = |t| \ge 2$ and   $1/2 \leq \Re s =\sigma <  1$,
we have 
$$ \zeta(s) \ll \vert \tau \vert^{c(1-\sigma)^{3/2}}\log \vert \tau\vert
\mand 
\zeta(1+it) \ll \log^{2/3}\vert t\vert,
$$
for some absolute constant $c>0$.
\end{lemma}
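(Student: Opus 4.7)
The plan is to deduce Lemma~\ref{lem:zeta} from the classical Vinogradov--Korobov method, which is standard and well-documented in the theory of the Riemann zeta-function. Both estimates are ultimately consequences of nontrivial bounds for exponential sums of the form $\sum_{M < n \le 2M} n^{-it}$, and can be found in standard references such as the monographs of Titchmarsh or Iwaniec--Kowalski. In writing a proof, my aim would be to assemble them from off-the-shelf pieces rather than rederive the Vinogradov mean value theorem.

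First I would use a truncated approximate formula (either the approximate functional equation, or a direct Euler--Maclaurin truncation of $\zeta(s)$) to represent
$$\zeta(\sigma+it) = \sum_{n \le |t|} n^{-s} + O\bigl(|t|^{-\sigma}\bigr)$$
uniformly for $1/2 \le \sigma < 1$ and $|t|\ge 2$. The task then reduces to estimating this Dirichlet polynomial. After a dyadic decomposition into pieces of length $M$ and partial summation to remove the smooth factor $n^{-\sigma}$, one is left with bounding the purely oscillatory sum $\sum_{M < n \le 2M} e^{-it\log n}$.

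The heart of the argument, and the principal technical obstacle, is the Vinogradov--Korobov bound for such sums. Exploiting the smoothness of the phase $\varphi(x) = -t\log x$ (whose $k$-th derivative has order $t/M^k$) together with Vinogradov's mean value theorem, one obtains a saving of the shape
$$\left|\sum_{M < n \le 2M} n^{-it}\right| \ll M \exp\!\left(-c_0\,\frac{(\log M)^3}{(\log |t|)^2}\right)$$
over the trivial bound, valid uniformly in a suitable range of $M$ relative to $|t|$. I would invoke this bound as a black box. Choosing $M$ optimally in the dyadic decomposition at $\sigma=1$ and summing then recovers the second inequality $\zeta(1+it) \ll \log^{2/3}|t|$.

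For the first inequality, I would run the same dyadic argument at a general $\sigma \in [1/2,1)$; the same exponential-sum bound gives a saving of $|t|^{-c(1-\sigma)^{3/2}}$ (up to a $\log|t|$ factor) in each dyadic block, which assembles into $\zeta(\sigma+it) \ll |t|^{c(1-\sigma)^{3/2}}\log|t|$ with some absolute constant $c>0$. Alternatively, one can first establish the bound on $\sigma=1$, combine it with the trivial convexity bound on $\sigma=1/2$, and apply the Phragm\'en--Lindel\"of principle to interpolate; both routes yield the stated estimate. The only genuinely hard input is the Vinogradov exponential-sum bound invoked above, and since this is entirely classical, the proof in the paper should be short and reference-driven.
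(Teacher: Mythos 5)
Your proposal is correct and matches the paper, whose entire proof is a citation of \cite[Theorem~8.27]{IwKow} and \cite[Corollary~8.28]{IwKow} --- precisely the Vinogradov--Korobov bounds you outline, proved there by the same truncation, dyadic decomposition, and exponential-sum machinery you describe. You correctly anticipated that the paper's proof would be purely reference-driven.
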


\begin{proof} See~\cite[Theorem~8.27]{IwKow} and~\cite[Corollary~8.28]{IwKow}, 
respectively. 
\end{proof}

The following consequence is important in verifying the assumption 
of~\cite[Theorem~1]{Bre1}, which is our main tool.

\begin{corollary}\label{cor:zeta}The function $(s-1)\zeta(s)$ verifies~\cite[Equation~(1.6)]{Bre1} in the range $\Re(s) >1/2$.
 \end{corollary}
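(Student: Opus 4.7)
The plan is to verify the growth condition of \cite[Eq.~(1.6)]{Bre1} for the entire function $g(s) := (s-1)\zeta(s)$ using only the inputs already assembled in Lemma~\ref{lem:zeta} and the Euler product~\eqref{eq:Euler}. Since $\zeta$ has a simple pole at $s=1$ as its only singularity, multiplying by $(s-1)$ produces a function that is holomorphic throughout $\Re s > 1/2$, which is the first ingredient such a condition requires.

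First I would split the region $\Re s = \sigma > 1/2$ into three pieces. In the half-plane $\sigma \ge 1+\delta$, absolute convergence of the Euler product~\eqref{eq:Euler} gives $|\zeta(s)| \le \zeta(\sigma) = O_\delta(1)$, so $|g(s)| \ll 1+|t|$. In the strip $1/2 + \delta \le \sigma \le 1$ with $|t|\ge 2$, Lemma~\ref{lem:zeta} supplies the subconvex bound $|\zeta(s)| \ll |t|^{c(1-\sigma)^{3/2}}\log|t|$, together with the sharper bound $|\zeta(1+it)| \ll \log^{2/3}|t|$ on the boundary $\sigma = 1$; multiplying by $|s-1|\ll 1+|t|$ produces
$$
|g(s)| \ll (1+|t|)^{1+c(1-\sigma)^{3/2}}\,\log(2+|t|).
$$
In the bounded region $|t|\le 2$, $\sigma \ge 1/2+\delta$, continuity of $g$ gives a uniform bound by compactness. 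Together these yield a uniform power-type bound on every vertical line in $\Re s > 1/2$, with exponent that decays (as a power of $1-\sigma$) toward the line $\sigma = 1$.

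The main obstacle is purely bookkeeping: one must match the particular form of the bound demanded by \cite[Eq.~(1.6)]{Bre1} (namely, a specific shape of the exponent as a function of $\sigma$ and a polynomial-in-$|t|$ growth rate) against the bound produced by Lemma~\ref{lem:zeta}. Since $(1-\sigma)^{3/2} \le (1-\sigma)$ for $\sigma\in[1/2,1]$, the bound above is in fact stronger than any condition of the standard Phragm\'en--Lindel\"of type, and the extra polynomial factor $(1+|t|)$ arising from the cofactor $(s-1)$ is harmless as \cite[Eq.~(1.6)]{Bre1} allows an ambient polynomial growth. Hence once the parameters in (1.6) of~\cite{Bre1} are identified, the verification amounts to a direct substitution of Lemma~\ref{lem:zeta} into the required inequality.
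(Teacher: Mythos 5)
Your argument is correct and follows essentially the same route as the paper: for $|\Im s|\ge 2$ one invokes Lemma~\ref{lem:zeta} together with boundedness in $\Re s>1$, and in the compact region $|\Im s|\le 2$ one uses holomorphy of $(s-1)\zeta(s)$ to get a uniform bound. Your extra three-way split and the remark that $(1-\sigma)^{3/2}\le 1-\sigma$ only sharpen the same verification, so nothing essentially new is added or missing.
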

\begin{proof} 
For $\vert \Im(s)\vert \geq 2$, this is a direct consequence of Lemma~\ref{lem:zeta} and the fact that for $\Re(s)>1$, the function $(s-1)\zeta(s)$ is bounded. In the bounded domain $\vert \Im(s)\vert \leq 2$, the function $s\zeta(1-s)$ is holomorphic thus bounded. The conclusion follows easily.
\end{proof}

\subsection{Bounds for the restricted divisor function}

The strategy behind the proof of  Theorem~\ref{thm:low theta} is to "mollify" theta moments by a short character sum. For that purpose, we need to have good estimates for sums of restricted divisor function. 
To do this, we employ some results of 
de~la~Bret\`eche~\cite{Bre1} on sums of arithmetical functions of many variables. These type of sums appears naturally when we count integer points of bounded height on some varieties. 
This has been used for example in~\cite{Bre2,Bre3} to prove Manin's conjecture in some special cases.

%

\begin{lemma}\label{lem:count}
For any integer  $k\geq 2$ and any real positive $\gamma_i \le 1$, $i=1, \ldots, k$, there exists a constant 
$\Gamma_k>0$ such that
\begin{equation}
\label{divisorrestricted}
\ssum_{\substack{a_i,b_i \le T^{\gamma_i},~i=1, \ldots, k\\
 a_1\cdots a_k= b_1\cdots b_k}} 1 \sim \Gamma_kT^{\gamma}\log^{(k-1)^2}T.
\end{equation}
where $\gamma = \gamma_1 + \ldots +\gamma_k$. 
\end{lemma}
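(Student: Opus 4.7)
The plan is to apply \cite[Theorem~1]{Bre1} to the $2k$-variable Dirichlet series
\begin{equation*}
F(\mathbf{s}, \mathbf{t}) = \sum_{\substack{a_1, \ldots, a_k, b_1, \ldots, b_k \ge 1\\ a_1 \cdots a_k = b_1 \cdots b_k}} \prod_{i=1}^k a_i^{-s_i} b_i^{-t_i},
\end{equation*}
whose summatory function on the box $\{a_i, b_i \le T^{\gamma_i}:1 \le i \le k\}$ is exactly the left-hand side of~\eqref{divisorrestricted}. Since the defining constraint is preserved under coprime decomposition, $F$ admits an Euler product $F = \prod_p E_p$ with
\begin{equation*}
E_p(\mathbf{s}, \mathbf{t}) = \sum_{\substack{\alpha_i, \beta_j \ge 0\\ \sum_i \alpha_i = \sum_j \beta_j}} p^{-\sum_i \alpha_i s_i - \sum_j \beta_j t_j} = 1 + \sum_{i,j=1}^k p^{-s_i - t_j} + O\(p^{-2\sigma}\),
\end{equation*}
where $\sigma = \min_{i,j}\Re(s_i + t_j)$. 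Matching the first-order expansion above with that of $\prod_{i,j}(1 - p^{-s_i - t_j})^{-1}$ produces the factorisation
\begin{equation*}
F(\mathbf{s}, \mathbf{t}) = H(\mathbf{s}, \mathbf{t}) \prod_{i,j=1}^k \zeta(s_i + t_j),
\end{equation*}
in which $H$ is an Euler product of the form $\prod_p (1 + O(p^{-2\sigma}))$, hence holomorphic and locally bounded in the region $\Re(s_i + t_j) > 1/2 + \varepsilon$ for every $i,j$.

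To feed this factorisation into~\cite[Theorem~1]{Bre1}, it remains to verify the growth hypothesis. The factor $H$ is bounded on compacta by the absolutely convergent Euler product above, and each factor $(s_i + t_j - 1)\zeta(s_i + t_j)$ satisfies the polynomial growth condition~\cite[Eq.~(1.6)]{Bre1} thanks to Corollary~\ref{cor:zeta}. Thus $F$ falls into the framework of~\cite{Bre1}, with polar divisor supported on the $k^2$ hyperplanes $\{s_i + t_j = 1\}_{i,j}$ inside $\C^{2k}$.

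Invoking~\cite[Theorem~1]{Bre1} then yields an asymptotic of the shape $\Gamma_k T^{\gamma_1 + \cdots + \gamma_k}(\log T)^{\rho}(1 + o(1))$, with $\rho$ determined by the combinatorics of the polar arrangement and with $\Gamma_k$ the product of $H$ evaluated at the critical point by a strictly positive polytope volume, whence $\Gamma_k > 0$. The $k^2$ normal vectors $e_i + f_j$ of the polar hyperplanes span only a $(2k - 1)$-dimensional subspace of $\C^{2k}$ (the space of relations $\sum c_{ij}(e_i + f_j) = 0$ being exactly the $(k-1)^2$-dimensional kernel of the row-and-column sum map), and the standard reading of the theorem in this situation delivers $\rho = k^2 - (2k - 1) = (k - 1)^2$, matching the stated exponent in~\eqref{divisorrestricted}. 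The main obstacle is precisely this careful bookkeeping of the polar combinatorics to pinpoint the log exponent $(k-1)^2$; once it is handled, the remaining verifications reduce to Corollary~\ref{cor:zeta} together with the absolute convergence of the Euler product defining $H$.
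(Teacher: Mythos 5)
Your route is the same as the paper's: form the $2k$-variable Dirichlet series attached to the constraint $a_1\cdots a_k=b_1\cdots b_k$, use multiplicativity to get the Euler product, factor it as $H$ times the product of the $k^2$ factors $\zeta(s_i+t_j)$, verify the growth hypothesis of~\cite{Bre1} via Corollary~\ref{cor:zeta}, and read the asymptotic off de la Bret\`eche's Tauberian theorem; your rank computation (the $k^2$ normal vectors $e_i+f_j$ span a $(2k-1)$-dimensional space, so $k^2-(2k-1)=(k-1)^2$) is exactly the paper's bookkeeping in its final step.

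There is, however, a gap at the conclusion. Invoking only~\cite[Theorem~1]{Bre1} yields an asymptotic of the form $T^{\gamma}Q(\log T)$ where $Q$ is a polynomial of degree \emph{at most} $k^2-(2k-1)$; it does not by itself guarantee that the degree is exactly $(k-1)^2$ nor that the leading coefficient is nonzero, which is precisely what the lemma claims. The paper therefore passes to the refinement~\cite[Theorem~2]{Bre1} and checks its extra condition~(iv): one must verify (a) that the Euler factor $H$ does not vanish at the central point $\bfalpha=(1/2,\ldots,1/2)$ --- done in the paper by the explicit evaluation, after the shift by $\bfalpha$, $H(0,\ldots,0)=\prod_{p}\(1-\frac1p\)^{k^2}\(1+\frac{k^2}{p}+\sum_{r\ge 2}\binom{r+k-1}{k-1}^2 p^{-r}\)>0$ --- and (b) that the direction form $e_1^*+\cdots+e_{2k}^*$ lies in the positive convex cone generated by the $k^2$ polar forms $X_a+X_{k+b}$ (it does, being $k^{-1}$ times their sum). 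Your closing assertion that $\Gamma_k$ equals ``$H$ at the critical point times a strictly positive polytope volume, whence $\Gamma_k>0$'' presupposes exactly these nondegeneracy facts; as written they are asserted rather than proved, and the nonvanishing of $H$ at the polar point in particular requires the explicit local computation. Once you add the appeal to~\cite[Theorem~2]{Bre1} together with these two verifications, your argument coincides with the paper's.
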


\begin{proof} We make use of~\cite[Theorems~1 and~2]{Bre1}
and complete  the proof in the following three steps.

\subsubsection*{Step~1}
First we prove that Assumption~P1 
of~\cite[Theorem~1]{Bre1} is satisfied with 
\begin{equation}
\label{eq:alpha}
\bfalpha =(1/2,\ldots ,1/2)\in \R^{2k}.
\end{equation}

For  $2k$ positive  integers $(m_1,\ldots, m_{2k})$ we set 
$f(m_1,\ldots, m_{2k}) =1$ if 
$$
m_1\cdots m_k =m_{k+1}\cdots m_{2k},
$$ 
and $f(m_1,\ldots, m_{2k}) =0$, otherwise. 
We see that $f$ is multiplicative, 
that is, 
$$
f(m_1n_1, \cdots ,m_{2k}n_{2k}) 
=f(m_1,  \cdots ,m_{2k} )f(n_1, \cdots  ,n_{2k})
$$ 
whenever $\gcd (m_1\cdots m_{2k},n_1\cdots n_{2k})=1$.

For a vector $\vec{s} = (s_1, \ldots, s_{2k}) \in \C^{2k}$
of $2k$ complex numbers, we define the multiple Dirichlet series
$$
F(\vec{s})
=\ssum_{m_1,\ldots ,m_{2k}\geq 1} 
\frac{f(m_1,\ldots ,m_{2k})}{m_1^{s_1}\cdots m_{2k}^{s_{2k}}}.
$$
Let $d_k(m)$ be the number of ways of writing a positive integer $m\geq 1$ as a product of $k$ positive integers.
Since
$$\vert m_1^{s_1}\cdots m_{2k}^{s_{2k}}\vert
\leq (m_1\cdots m_{2k})^{\sigma(\vec{s})},
$$ 
where 
\begin{equation}
\label{eq:sigma s}
\sigma(\vec{s})=\min\{\Re~s_j~:~1\leq j\leq 2k\},
\end{equation}
we have 
$$
\ssum_{m_1,\ldots ,m_{2k}\geq 1} 
\left| \frac{f(m_1,\ldots ,m_{2k})}{m_1^{s_1}\cdots m_{2k}^{s_{2k}}}\right|
\leq\sum_{m\geq 1} \frac{(d_k(m))^2}{m^{2\sigma(\vec{s})}}
=\prod_{p\geq 2}\(
\sum_{a\geq 0} \frac{\binom{a+k-1}{k-1}^2}{p^{2a\sigma(\vec{s})}}
\),
$$
which proves the absolute convergence of $F(\vec{s})$ in the range 
$\sigma(\vec{s}) >1/2$ and  verifies  Assumption~P1 
of~\cite[Theorem~1]{Bre1} for $\bfalpha$ given by~\eqref{eq:alpha}.

\subsubsection*{Step~2} 
Let us recall some notations used in~\cite{Bre1}. We denote by $\mathcal{L}_{2k}(\mathbb{C})$ the space of linear forms
$$
\ell(X_1, \ldots, X_{2k})  \in \C[X_1, \ldots, X_{2k}]
$$
Following~\cite{Bre1}, we denote by $\vec{e_j}$, $j=1\ldots, 2k$, the canonical 
 basis of $\C^{2k}$ and $\left\{e_j^*\right\}_{j=1}^{2k}$ 
 the dual basis in $\mathcal{L}_{2k}(\mathbb{C})$.
Thus, in our case the linear form $e_j^*$ are explicitly 
given by 
$$
e_j^*(X_1, \ldots, X_{2k})  = X_j, \qquad j=1\ldots, 2k.
$$

We now prove that Assumptions~P2 and~P3 
of~\cite[Theorem~1]{Bre1} are satisfied with the $n=k^2$ linear forms 
$$
\ell^{(a,b)} =e_a^*+e_{k+b}^* = X_a + X_{k+b}, \qquad 1\leq a,b\leq k
$$ 
(here there is no linear forms $h^{(r)}$, in other 
words $\cR$ is empty in our version of the Assumption~P2 of~\cite[Theorem~1]{Bre1}).
Since $f$ is multiplicative, in this range, we have 
(where $p\geq 2$ runs over the prime numbers):
\begin{equation}
\label{eq:F Fp}F(\vec{s})
=\prod_pF_p(\vec{s}),
\end{equation}
with 
$$F_p(\vec{s})
=\ssum_{r_1,\ldots ,r_{2k}\geq 0} 
\frac{f(p^{r_1},\ldots,p^{r_{2k}})}{p^{r_1s_1+\cdots +r_{2k}s_{2k}}}
= 
\ssum_{\substack{r_1,\ldots ,r_{2k}\geq 0\\ r_1+\cdots +r_k =r_{k+1}+\cdots +r_{2k}}}
\frac{1}{p^{r_1s_1+\cdots +r_{2k}s_{2k}}}.$$ 
Now, 
$$F_p(\vec{s})
=1
+\sum_{a=1}^k\sum_{b=k+1}^{2k}
\frac{1}{p^{s_a+s_b}}
+\ssum_{\substack{r_1,\ldots ,r_{2k}\geq 0\\ r_1+\cdots +r_k =r_{k+1}+\cdots +r_{2k}\geq 2}}
\frac{1}{p^{r_1s_1+\cdots +r_{2k}s_{2k}}}$$
and, with $\sigma(\vec{s}) >0$,   where $\sigma(\vec{s})$ is given 
by~\eqref{eq:sigma s}, 
the absolute value of the third term of the right hand side of the above equality is bounded by
\begin{equation*}
\begin{split}
\ssum_{\substack{r_1,\ldots ,r_{2k}\geq 0\\ r_1+\cdots +r_k =r_{k+1}+\cdots +r_{2k}\geq 2}} &
\frac{1}{p^{(r_1+\cdots +r_{2k})\sigma(\vec{s})}}\\
& =\sum_{r\geq 2} \binom{r+k-1}{k-1}^2\frac{1}{p^{2r\sigma(\vec{s})}}
\ll \sum_{r\geq 2} \frac{r^{2k}}{p^{2r\sigma(\vec{s})}}. 
\end{split}
\end{equation*}
Hence
$$
F_p(\vec{s}) =1 +\sum_{a=1}^k\sum_{b=k+1}^{2k}
\frac{1}{p^{s_a+s_b}}
+O_A\(\frac{1}{p^{4\sigma(\vec{s})}}\)
$$
(where the constants in these $O_A$ depend on $A>0$).
Furthermore, for a given $A>0$ and for $\sigma(\vec{s}) \geq A$ we have 
$$
\prod_{a=1}^k\prod_{b=k+1}^{2k} \(1-\frac{1}{p^{s_a+s_b}}\)
=1 -\sum_{a=1}^k\sum_{b=k+1}^{2k} \frac{1}{p^{s_a+s_b}}
+O_A\(\frac{1}{p^{4\sigma(\vec{s})}}\).
$$
Therefore, we see that 
\begin{equation}
\label{eq:Fp prod}
F_p(\vec{s})\prod_{a=1}^k\prod_{b=k+1}^{2k} \(1-\frac{1}{p^{s_a+s_b}}\)
=1+O_A\(\frac{1}{p^{4\sigma(\vec{s})}}\).
\end{equation}
Taking the product over all primes and 
using the   Euler formula~\eqref{eq:Euler}, we
obtain from~\eqref{eq:F Fp} and~\eqref{eq:Fp prod} that for $\sigma(\vec{s})  > 1$ 
we have
\begin{equation}
\label{eq:F prod}
F(\vec{s}) =  \psi(\vec{s}) \prod_{a=1}^k\prod_{b=k+1}^{2k}\zeta(s_a+s_b), 
\end{equation}
where $\psi(\vec{s})$ is a holomorphic function 
for $\sigma(\vec{s}) \ge A$ for any fixed $A > 1/4$.

We now writing~\eqref{eq:F prod} as 
\begin{equation}
\label{eq:F sasb prod}
F(\vec{s}) \prod_{a=1}^k\prod_{b=k+1}^{2k}(s_a+s_b-1) 
=  \psi(\vec{s}) \prod_{a=1}^k\prod_{b=k+1}^{2k}(s_a+s_b-1)\zeta(s_a+s_b).
\end{equation}

Recalling Corollary~\ref{cor:zeta} 
we conclude that the left hand side of~\eqref{eq:F sasb prod} 
verifies~\cite[Equation~(1.6)]{Bre1} in the range 
$\sigma(\vec{s})\geq A$, for any $A>1/4$. Translating each coordinate by $1/2$, we see that 
$$
H(\vec{s}) 
=F(\vec{s} + \bfalpha)\prod_{a=1}^k\prod_{b=k+1}^{2k}(s_a+s_b)
$$
verifies~\cite[Equation~(1.6)]{Bre1} in the range 
$\sigma(\vec{s}) \geq B$ for any $B=A-1/2>-1/4$. 
Hence, Assumptions P2 and~P3  of~\cite[Theorem~1]{Bre1} are satisfied
for $H(\vec{s})$.

%

\subsubsection*{Step~3} 
It is easy to verify the last Assumption~P4 of~\cite[Theorem~1]{Bre1}. To conclude, we need a stronger version of~\cite[Theorem~1]{Bre1} which gives the exact power of $\log x$ in the asymptotic~\eqref{divisorrestricted}. Under the hypothesis of~\cite[Theorem~1]{Bre1}, we show that the extra condition~(iv) of~\cite[Theorem~2]{Bre1} is satisfied with 
\begin{equation}
\label{eq:beta}
\beta =(1,\ldots ,1)\in \R^{2k}.
\end{equation}

We start with the inequality
$$
H(0,\ldots ,0) =\prod_{p\geq 2}
\(1-\frac{1}{p}\)^{k^2} \(1+ \frac{k^2}{p}
+\sum_{r\geq 2}  \binom{r+k-1}{ k-1}^2 \frac{1}{p^{r}}
\)>0.
$$ 
Now, we are able to conclude the proof. With the notations of Step~2 above, we clearly have that the linear form $e_1^*+\cdots+e_{2k}^*$ lies in the positive convex cone of the linear forms $l^{(a,b)}$. Furthermore we have the equality 
$m={\mathrm{rank}}(\{\ell^{(a,b)};\ 1\leq a\leq k,\ k+1\leq b\leq 2k\}) = 2k-1$ and the result follows.
\end{proof}

%
%
%

Clearly, the constant $\Gamma_k$ of Lemma~\ref{lem:count}
can be evaluated explicitely.

\subsection{Moments of weighted character sums}
\label{sec:moment char}

Assume we are give some sequences $\xi_n$ of positive real 
numbers with $\xi_n \gg 1$. We consider the Dirichlet polynomials~\eqref{eq:Dir Poly}. 
Furthermore, we fix some $\varepsilon>0$, set 
$$
x=p^{\varepsilon},
$$ 
and define 
$$
A_\varepsilon(\chi)= S_p(x;\chi).
$$

We consider the following two sums
\begin{equation}
\label{eq:S1S2}
\Sigma_1=\sum_{\chi\in \cX_p^+\backslash \chi{_0}}\vert\Xi(\chi;t)\vert^{2}\vert A_\varepsilon(\chi)\vert^{2k-2} 
\quad \text{and}\quad
\Sigma_2 =\sum_{\chi\in \cX_p^+}\vert A_\varepsilon(\chi)\vert^{2k} .
\end{equation}

\begin{lemma}\label{lem:1-2moments weight} 
Let $t = p^{\tau}$ with some fixed $\tau \in (0,1)$, 
For any integer $k \ge 2$ and a  sufficiently small $\varepsilon>0$, there exists a 
constant $C(\varepsilon,k)>0$ depending only on $\varepsilon$ and $k$, 
such that:
$$
\Sigma_1 \gg p^{1+\varepsilon(k-1)} t \log^{(k-1)^2}p
\mand \Sigma_2 \sim C(\varepsilon,k) p^{1+\varepsilon k}\log^{(k-1)^2}p.
$$
 \end{lemma}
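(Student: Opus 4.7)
The plan is to compute both sums by expanding the absolute values into double sums, swapping the order of summation, and applying orthogonality of the even characters modulo $p$:
\[
\sum_{\chi\in\cX_p^+}\chi(m)\bar\chi(n) = \frac{p-1}{2}
\]
when $m\equiv\pm n\pmod p$ with $\gcd(mn,p)=1$, and $0$ otherwise. The log-power $(k-1)^2$ will then be supplied directly by Lemma~\ref{lem:count}.

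For $\Sigma_2$, expand $|A_\varepsilon(\chi)|^{2k}$ as the sum over $2k$-tuples with all entries at most $x$; after interchanging the sum over $\chi$ to the inside, orthogonality picks out tuples with $a_1\cdots a_k\equiv\pm b_1\cdots b_k\pmod p$. Both products are bounded by $x^k=p^{k\varepsilon}$, so for $\varepsilon$ small enough (namely $2p^{k\varepsilon}<p$) the congruence collapses to the equality $a_1\cdots a_k=b_1\cdots b_k$, because positive integers of size $<p/2$ cannot sum to a multiple of $p$. Lemma~\ref{lem:count} applied with $T=p$ and $\gamma_1=\cdots=\gamma_k=\varepsilon$ then gives
\[
\Sigma_2 \sim \frac{\Gamma_k}{2}\,p^{1+k\varepsilon}\log^{(k-1)^2}p,
\]
so one may take $C(\varepsilon,k)=\Gamma_k/2$.

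For $\Sigma_1$, expand similarly with the extra factor $|\Xi(\chi;t)|^2=\sum_{c,d\le t}\xi_c\xi_d\chi(c)\bar\chi(d)$, so that the relevant combined variables become $M_1=c\,a_1\cdots a_{k-1}$ and $M_2=d\,b_1\cdots b_{k-1}$, each of size at most $tx^{k-1}=p^{\tau+(k-1)\varepsilon}$. For $\varepsilon$ small enough that $\tau+(k-1)\varepsilon<1$, the $\pm$-congruence again collapses to $M_1=M_2$. Summing over all of $\cX_p^+$ (which introduces only the principal-character term $|\Xi(\chi_0;t)|^2|A_\varepsilon(\chi_0)|^{2k-2}\ll t^2p^{(2k-2)\varepsilon}$, of smaller order), using $\xi_c\xi_d\gg 1$, and invoking Lemma~\ref{lem:count} with $(\gamma_1,\gamma_2,\dots,\gamma_k)=(\tau,\varepsilon,\dots,\varepsilon)$ to count the resulting tuples yields a count $\sim\Gamma_k\,t\,p^{(k-1)\varepsilon}\log^{(k-1)^2}p$, hence
\[
\Sigma_1 \gg p^{1+(k-1)\varepsilon}\,t\,\log^{(k-1)^2}p.
\]

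The main technical obstacle is to arrange that both the congruence-to-equality collapse (requiring $tx^{k-1}<p/2$) and the negligibility of the principal-character term (requiring $t^2p^{(2k-2)\varepsilon}\ll p^{1+(k-1)\varepsilon}t$, i.e.\ $p^{\tau+(k-1)\varepsilon}\ll p$) hold simultaneously, which is achieved by taking $\varepsilon>0$ sufficiently small depending on $\tau$ and $k$. A secondary subtlety is that the lower bound uses $\xi_n\gg 1$ directly through $\xi_c\xi_d\gg 1$, while the principal-term estimate implicitly requires a matching upper bound $\xi_n=O(1)$, the standard tacit convention for coefficient sequences in this setting.
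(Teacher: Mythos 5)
Your proposal is correct and follows essentially the same route as the paper: orthogonality of even characters, collapse of the congruence $\pm$ to an exact equality for $\varepsilon$ small, removal of the principal character with an $O(t^2p^{2\varepsilon(k-1)})$ error, and an appeal to Lemma~\ref{lem:count}. The only cosmetic difference is the normalization in Lemma~\ref{lem:count} for $\Sigma_2$ (you take $T=p$, $\gamma_i=\varepsilon$, the paper takes $T=x$, $\gamma_i=1$, which merely changes the constant $C(\varepsilon,k)$ by a factor $\varepsilon^{(k-1)^2}$), and your remarks on the $p/2$ threshold and the tacit $\xi_n=O(1)$ assumption are accurate refinements of points the paper leaves implicit.
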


\begin{proof} We start with 
proving  asymptotic formula for $\Sigma_2$ as the proof is shorter.  
Using the orthogonality relations for even characters, we obtain:
$$
\Sigma_2= \frac{(p-1)}{2} \ssum_{\substack{a_1,b_1\ldots,a_k,b_k \leq x\\ 
a_1\cdots a_k\equiv\pm b_1\cdots b_k \pmod p}} 1.
$$ 
Choosing $\varepsilon < 1/k$  to ensure that $p^{\varepsilon k} < p$, we see the only possible solutions to
$$
a_1\cdots a_k\equiv\pm b_1\cdots b_k \pmod p
$$ 
are those with  
$ a_1\cdots a_k= b_1\cdots b_k$. 
Hence
$$\Sigma_2= \frac{(p-1)}{2} \ssum_{\substack{a_1,b_1\ldots,a_k,b_k \leq x \\ a_1\cdots a_k= b_1\cdots b_k}} 1$$ and we conclude using Lemma~\ref{lem:count}
 with $T =  p^\varepsilon  = x$ and $\gamma_1=\ldots = \gamma_k = 1$.

We complete the sum $\Sigma_1$ including the trivial character and bound its contribution trivially by 
$$
\vert \Xi(\chi_0;t)\vert^2  \vert A_\varepsilon(\chi_0)\vert^{2k-2} 
=O\(t^2 p^{2\varepsilon(k-1)}\).
$$ 
Thus
$$
\Sigma_1=
\sum_{\chi\in \cX_p^+}\vert\Xi(\chi;t)\vert^{2}\vert A_\varepsilon(\chi)\vert^{2k-2}+ O\(t^2 p^{2\varepsilon(k-1)}\) .
$$

Using the orthogonality of multiplicative characters again, we derive
\begin{equation}
\label{eq:S1 Expand}
\Sigma_1=\frac{(p-1)}{2} \ssum_{\substack{a,b\leq  t \\ a_1,b_1,\ldots,a_{k-1}, 
b_{k-1}\leq x \\ aa_1\cdots a_{k-1}=\pm bb_1\cdots b_{k-1} (\bmod q)}} \xi_a \xi_b +O\(t^2 p^{2\varepsilon(k-1)}\).
\end{equation}

Using  Lemma~\ref{lem:count} with  $T=p$, $\gamma_1 = \tau$  
and $\gamma_i =  \varepsilon$ for $2\leq i \leq k$, and taking a sufficiently 
small $\varepsilon > 0$,  we 
conclude the proof.
\end{proof}

\subsection{Moments  of the theta function}

We define $A_\varepsilon(\chi)$ as in Section~\ref{sec:moment char} 
and  consider the following 
\begin{equation}
\label{eq:fS}
\fS=\sum_{\chi\in \cX_p^+\backslash \chi{_0}}\vert\vartheta(1,\chi)\vert^{2}\vert A_\varepsilon(\chi)\vert^{2k-2}
\end{equation}
(with  the above choice  $x=p^{\varepsilon}$).

We use the following  approximation of 
$\vartheta(1,\chi)$ by a truncated sum, which easily
follows from the estimating the tail via the corresponding 
integral. 

\begin{lemma}
\label{lem:approx}
Let $\delta>0$ be a positive number.
 Then 
$$
\vartheta(1,\chi)=\sum_{n\leq p^{1/2+\delta}} 
\chi(n)e^{-\pi n^2/p} + O(p^{1/2}e^{-p^{\delta}}).
$$
\end{lemma}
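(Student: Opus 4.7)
The plan is a straightforward tail estimate based on Gaussian decay. Since the lemma is applied immediately after the definition of $\fS$ in~\eqref{eq:fS}, where $\chi$ ranges over even characters, we have $\eta_\chi = 0$ and hence $\vartheta(1,\chi) = \sum_{n=1}^\infty \chi(n) e^{-\pi n^2/p}$. Subtracting the truncated partial sum, the quantity to bound is the tail
$$
R(\chi) = \sum_{n > p^{1/2+\delta}} \chi(n) e^{-\pi n^2/p}.
$$

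The first step is to dispense with the character by the trivial inequality $|\chi(n)|\le 1$, which reduces the problem to estimating $\sum_{n > p^{1/2+\delta}} e^{-\pi n^2/p}$. Since $e^{-\pi x^2/p}$ is positive and strictly decreasing on $(0,\infty)$, I would compare this sum to the integral $\int_{p^{1/2+\delta}-1}^\infty e^{-\pi x^2/p}\,dx$ by the standard monotone comparison. The substitution $u = x/\sqrt p$ then converts this to $\sqrt p \int_{p^\delta - 1/\sqrt p}^\infty e^{-\pi u^2}\,du$.

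The final step is the classical Gaussian tail bound $\int_a^\infty e^{-\pi u^2}\,du \le (\pi a)^{-1} e^{-\pi a^2}$, valid for $a\ge 1$, which gives $|R(\chi)| \ll \sqrt p\, e^{-\pi p^{2\delta}/2}$ once $p$ is large. Since for any fixed $\delta>0$ we have $e^{-\pi p^{2\delta}/2} \ll e^{-p^\delta}$ as $p\to\infty$, the claimed error term $O(p^{1/2} e^{-p^\delta})$ follows.

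There is no genuine obstacle here: the entire argument is routine Gaussian-tail bookkeeping, and the stated exponent $e^{-p^\delta}$ is in fact a weakening of what the method delivers (the true saving is doubly exponential in $\delta$), chosen presumably because the cruder form is already more than sufficient for the later applications of the lemma.
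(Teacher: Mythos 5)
Your proof is correct and is exactly the argument the paper intends: the authors give no details, stating only that the lemma ``easily follows from estimating the tail via the corresponding integral,'' and your comparison of $\sum_{n>p^{1/2+\delta}}e^{-\pi n^2/p}$ with the Gaussian integral, followed by the standard tail bound $\int_a^\infty e^{-\pi u^2}\,du\ll a^{-1}e^{-\pi a^2}$, is precisely that routine computation. Your observation that the method actually yields the much stronger error $O(p^{1/2}e^{-cp^{2\delta}})$, of which the stated bound is a deliberate weakening, is also accurate.
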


The proof of Theorem~\ref{thm:low theta} is derived  from the
following  moment  estimate, which in turn follows from Lemmas~\ref{lem:1-2moments weight} 
and~\ref{lem:approx}

\begin{lemma}\label{lem:1-2moments} 
For any integer $k \ge 2$ and a  sufficiently small $\varepsilon>0$, there exists a 
constant $C(\varepsilon,k)>0$ depending only on $\varepsilon$ and $k$, 
such that:
$$
\fS\gg p^{3/2+\varepsilon(k-1)}\log^{(k-1)^2}p.
$$
 \end{lemma}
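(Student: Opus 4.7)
The plan is to compare $\fS$ with the mollified moment $\Sigma_1$ of Lemma~\ref{lem:1-2moments weight} via the Cauchy--Schwarz inequality, taking as the mollifier precisely the initial segment of the theta series itself. I set $t=p^{1/2}$ and $\xi_n=e^{-\pi n^2/p}$ for $n\le t$, so that $\xi_n\in[e^{-\pi},1]$ and in particular $\xi_n\gg 1$. Lemma~\ref{lem:1-2moments weight} with $\tau=1/2$ then yields
$$
\Sigma_1=\sum_{\chi\in \cX_p^+\backslash\chi_0}|\Xi_p(\chi;t)|^{2}|A_\varepsilon(\chi)|^{2k-2} \gg p^{3/2+\varepsilon(k-1)}\log^{(k-1)^2}p.
$$

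Cauchy--Schwarz with the non-negative weight $|A_\varepsilon(\chi)|^{2k-2}$ applied to the inner product of $\vartheta(1,\cdot)$ against $\Xi_p(\cdot;t)$ gives
$$
\Bigl|\sum_{\chi\in\cX_p^+\backslash\chi_0}\vartheta(1,\chi)\overline{\Xi_p(\chi;t)}\,|A_\varepsilon(\chi)|^{2k-2}\Bigr|^2\le\fS\cdot\Sigma_1,
$$
so the conclusion will follow once the inner product on the left is shown to be $\gg\Sigma_1$. Approximating $\vartheta(1,\chi)$ by its truncation $\tilde\vartheta(\chi)=\sum_{n\le p^{1/2+\delta}}\chi(n)e^{-\pi n^2/p}$ via Lemma~\ref{lem:approx} with a small fixed $\delta>0$, and invoking the orthogonality relations for even characters, the inner product equals, up to a super-polynomially small tail contribution,
$$
\frac{p-1}{2}\ssum_{\substack{n\le p^{1/2+\delta},\ m\le t\\ j_i,i_i\le p^\varepsilon\\ nj_1\cdots j_{k-1}\equiv\pm\, mi_1\cdots i_{k-1}\pmod{p}}}e^{-\pi(n^2+m^2)/p}\;-\;E_{0},
$$
where $E_{0}$ absorbs the subtracted principal-character term.

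For $\varepsilon,\delta>0$ chosen sufficiently small in terms of $k$, each of the products $nj_1\cdots j_{k-1}$ and $mi_1\cdots i_{k-1}$ is bounded by $p^{1/2+\delta+(k-1)\varepsilon}<p/2$, which forces the $\pm$ congruence to collapse to the single equation $nj_1\cdots j_{k-1}=mi_1\cdots i_{k-1}$. The main term is then a sum of non-negative Gaussian weights, and restricting its outer variable down to $n\le t$ recovers exactly $\Sigma_1$, up to an analogous $\chi_0$ correction. Hence the inner product is at least $\Sigma_1-O(p^{1+2\varepsilon(k-1)})=\Sigma_1(1-o(1))$, and the Cauchy--Schwarz inequality gives $\fS\gg\Sigma_1\gg p^{3/2+\varepsilon(k-1)}\log^{(k-1)^2}p$.

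The main technical obstacle I anticipate is the careful bookkeeping that shows the various principal-character corrections (the subtracted $E_{0}$ and its analogue inside $\Sigma_1$) are genuinely of lower order than $\Sigma_1$. This reduces to the elementary estimates $\vartheta(1,\chi_0),\Xi_p(\chi_0;t)=O(p^{1/2})$ and $A_\varepsilon(\chi_0)\ll p^{\varepsilon}$, which bound all such corrections by $O(p^{1+2\varepsilon(k-1)})$, strictly smaller than $\Sigma_1$ whenever $\varepsilon<1/(2(k-1))$.
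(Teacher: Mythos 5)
Your proof is correct, and it runs on the same engine as the paper's: truncation of $\vartheta(1,\chi)$ via Lemma~\ref{lem:approx}, orthogonality of even characters, collapsing the congruence to the equation $nj_1\cdots j_{k-1}=mi_1\cdots i_{k-1}$ because both sides stay below $p/2$, the diagonal count supplied by Lemma~\ref{lem:1-2moments weight} with $\tau=1/2$, and the $O(p^{1+2\varepsilon(k-1)})$ principal-character corrections. The packaging, however, differs. The paper bounds $\fS$ directly: it truncates at $p^{2/3}$, expands $|\vartheta(1,\chi)|^2|A_\varepsilon(\chi)|^{2k-2}$ by orthogonality, and keeps only the diagonal terms with $a,b\le p^{1/2}$, where $e^{-\pi(a^2+b^2)/p}\gg 1$, so positivity alone yields the lower bound. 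You instead insert a Cauchy--Schwarz step against the Dirichlet polynomial $\Xi_p(\chi;t)$ with $t=p^{1/2}$ and Gaussian coefficients $\xi_n=e^{-\pi n^2/p}$, and lower-bound the cross term by the same diagonal positivity. This is valid --- note only that your $\xi_n$ depend on $p$, so you should remark that Lemma~\ref{lem:1-2moments weight} holds uniformly for coefficients bounded below by an absolute constant such as $e^{-\pi}$, which its proof clearly does --- but the Cauchy--Schwarz detour buys nothing: estimating the cross term already requires the full orthogonality expansion that bounds $\fS$ directly, and in effect you mollify twice, since a H\"older step of exactly this type is what later converts Lemma~\ref{lem:1-2moments} into Theorem~\ref{thm:low theta}. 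The paper's route is shorter; yours has the mild advantage of quoting the lower bound for $\Sigma_1$ verbatim from Lemma~\ref{lem:1-2moments weight} instead of re-deriving the diagonal count inside the proof.
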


\begin{proof}  
Using the trivial bound
$$
\sum_{n\leq p^{2/3}} \chi(n)e^{-\pi n^2/p} \ll p^{2/3}
$$
by Lemma~\ref{lem:approx} (with $\delta = 1/6$) we have
$$
|\vartheta(1,\chi)|^2 =
\left \vert\sum_{n\leq p^{2/3}}
\chi(n)e^{-\pi n^2/p}  \right \vert^{2} + O\(p^{5/6}e^{-p^{1/6}}\).
$$
Therefore,
\begin{equation*}
\begin{split}
\fS=
 \sum_{\chi\in \cX_p^+\backslash \chi{_0}}&\left \vert \sum_{n\leq p^{2/3}} \chi(n)e^{-\pi n^2/p}\right \vert^{2}\vert A_\varepsilon(\chi)\vert^{2k-2}\\
& + O\(p^{5/6}e^{-p^{1/6}} 
\sum_{\chi\in \cX_p^+\backslash \chi{_0}} \vert A_\varepsilon(\chi)\vert^{2k-2}\) .
\end{split}
\end{equation*}
Then, by the  orthogonality of multiplicative characters (or using the
analogue of the asymptotic formula of Lemma~\ref{lem:1-2moments weight} 
 for $\Sigma_2$ with $k-1$ instead 
of $k$), we obtain 
\begin{equation*}
\begin{split}
\sum_{\chi\in \cX_p^+\backslash \chi{_0}} \vert A_\varepsilon(\chi)\vert^{2k-2} &
\le \sum_{\chi\in \cX_p^+} \vert A_\varepsilon(\chi)\vert^{2k-2} \\
& \ll p x^{k-1} \log^{(k-2)^2}p= p^{1+\varepsilon(k-1)}  \log^{(k-2)^2}p,
\end{split}
\end{equation*}
which implies the asymptotic formula
$$
\fS=
 \sum_{\chi\in \cX_p^+\backslash \chi{_0}}\left \vert \sum_{n\leq p^{2/3}} \chi(n)e^{-\pi n^2/p}\right \vert^{2}\vert A_\varepsilon(\chi)\vert^{2k-2} + O\(1\) .
$$
We complete the sum $\fS$ including the trivial character and bound its contribution trivially by 
$$
O\(|\vartheta(1,\chi_0)|^2 x^{2k-2} \)
= O\(px^{2k-2} \) =O\(p^{1+2\varepsilon(k-1)}\).
$$ 
Thus
$$
\fS=
 \sum_{\chi\in \cX_p^+}\left \vert \sum_{n\leq p^{2/3}} \chi(n)e^{-\pi n^2/p}\right \vert^{2}\vert A_\varepsilon(\chi)\vert^{2k-2} + O\(p^{1+2\varepsilon(k-1)}\) .
$$

Using the orthogonality of multiplicative characters again, we derive
$$
\fS=\frac{(p-1)}{2} \ssum_{\substack{a,b\leq  p^{2/3} \\ a_1,b_1,\ldots,a_{k-1}, 
b_{k-1}\leq x \\ aa_1\cdots a_{k-1}=\pm bb_1\cdots b_{k-1} (\bmod q)}} e^{-\pi (a^2+b^2)/p} + O\(p^{1+2\varepsilon(k-1)}\).
$$ 
 Hence, restricting the summation to $a,b \le p^{1/2}$ and using that 
 in this case $e^{-\pi (a^2+b^2)/p} \gg 1$ we obtain 
 $$
\fS\gg  p \ssum_{\substack{a,b \leq p^{1/2} \\ 
a_1,b_1,\ldots,a_{k-1},b_{k-1}\leq x  \\ 
aa_1\cdots a_{k-1}=bb_1\cdots b_{k-1}}}1 +O\(p^{1+2\varepsilon(k-1)}\).
$$
We now recall the formula~\eqref{eq:S1 Expand} and use  Lemma~\ref{lem:1-2moments weight} 
with $\tau=1/2$. The result now follows. 
\end{proof}

\subsection{Proof of Theorem~\ref{thm:low theta}}
\label{sec:low theta}

For the sums $\Sigma_2$ and $\fS$ given by~\eqref{eq:S1S2} and~\eqref{eq:fS}, 
with a sufficiently small $\varepsilon>0$, 
by the H\"older inequality, we get
$$ \fS^{k} \leq \Sigma_2^{k-1} \sum_{\chi\in \cX_p^+\backslash \chi{_0}}\vert \vartheta(1,\chi)\vert^{2k} ,
$$ 
We now recall Lemma~\ref{lem:1-2moments weight} that gives an upper bound $\Sigma_2$ and
Lemma~\ref{lem:1-2moments} that gives an lower  bound on $\fS$.  This yields to the lower bound
$$ \sum_{\chi\in \cX_p^+\backslash \chi{_0}}\vert \vartheta(1,\chi)\vert^{2k}  \gg p^{1+k/2}\log^{(k-1)^2} p.
$$ 
The proof in the case of odd characters follows exactly along the same lines.

\subsection{Proof of Theorem~\ref{thm:low char}} 
\label{sec:low char}

We fix $k$ with $(k-1)^2 > A$ and a sufficiently small $\varepsilon>0$.
We then  proceed as in the proof of 
 Theorem~\ref{thm:low theta} and obtain 
$$ \Sigma_1^{k} \leq \Sigma_2^{k-1} \sum_{\chi\in \cX_p^+\backslash \chi{_0}}\vert \Xi_p(\chi;t)\vert^{2k} ,
$$ 
where $\Sigma_1$ and $\Sigma_2$ are  given by~\eqref{eq:S1S2}. 
We now apply  Lemma~\ref{lem:1-2moments weight} 
and derive 
$$ 
\sum_{\chi\in \cX_p^+\backslash \chi{_0}}\vert \Xi_p(\chi;t)\vert^{2k}  \gg p t^{k/2}\log^{(k-1)^2} p, 
$$
which concludes the proof.

\section{Bounds of character sums}

\subsection{Preliminaries}

We extend the definitions of $\cX_p$ and $\cX_p^*$ to arbitrary 
 integers $k\ge 2$
and use $\cX_k$  and $\cX_k^*$ to denote the sets of all
all  characters and nonprincipal {\it primitive\/}
characters modulo $k$, respectively.
 
Similarly we defined $S_k(\chi;t)$ by~\eqref{eq:Sums St} 
for an arbitrary  integer $k\ge 2$ and
$\chi\in \cX_k$. 

We estimate the sums $S_k(\chi;t)$ given by~\eqref{eq:Sums St} for almost all moduli $k$ using 
the ideas of Garaev~\cite{Gar}. 

We now define the function $e(z) = \exp(2 \pi i z)$.
We recall, that for any integer $z$ and an odd integer  $Q = 2M +1\ge 1$,  we have 
the orthogonality relation
\begin{equation}
\label{eq:Orth}
\sum_{b=-M}^M \e(bz/Q) = \left\{\begin{array}{ll}
Q,&\quad\text{if $z\equiv 0 \pmod Q$,}\\
0,&\quad\text{if $z\not\equiv 0 \pmod Q$,}
\end{array}
\right.
\end{equation}
see~\cite[Section~3.1]{IwKow}. 

Furthermore, we also  need the bound
\begin{equation}
\label{eq:Incompl}
\sum_{n=U+1}^{U+V} \e(bn/Q) \ll  \min\left\{V, \frac{Q}{|b|}\right\},
\end{equation}
which holds for any integers  $b$, $U$ and $V\ge 1$ with $0 < |b| \le Q/2$,
see~\cite[Bound~(8.6)]{IwKow}.

First we recall the classical large sieve 
inequality, see~\cite[Theorem~7.11]{IwKow}:

\begin{lemma}
\label{lem:Large Sieve}
Let $a_1, \ldots, a_H$ be an arbitrary sequence
of complex numbers and let
$$
A = \sum_{h=1}^H |a_h|^2 \mand
T(u) = \sum_{h=1}^H a_h \exp(2 \pi i hu).
$$
Then, for an arbitrary $R\ge 1$, we have
$$
\sum_{1 \le r \le R} \sum_{\substack{v=1\\ \gcd(v,r) =1}}^{r}
\left|T(v/r)\right|^2 \ll
\( R^2+ H\) A.
$$
\end{lemma}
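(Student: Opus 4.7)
The plan is to reduce the statement, via $L^2$ duality, to an analytic inequality about exponential sums evaluated at well-spaced points on $\R/\Z$. By the duality principle for bounded linear operators between $\ell^2$ spaces, the stated inequality is equivalent to the dual formulation
\begin{equation*}
\sum_{h=1}^H \left|\sum_{1 \le r \le R} \sum_{\substack{v=1\\\gcd(v,r)=1}}^{r} c_{r,v}\, \e(hv/r)\right|^2 \ll (R^2 + H) \sum_{r,v} |c_{r,v}|^2
\end{equation*}
for arbitrary complex coefficients $c_{r,v}$, and it is this dual form that I would prove.

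The first step is to observe that the Farey fractions $v/r$ with $1 \le r \le R$, $1 \le v \le r$, $\gcd(v,r)=1$, when viewed modulo $1$, form a $R^{-2}$-well-spaced family. Indeed, for any two distinct such fractions $v_1/r_1$ and $v_2/r_2$ one has $|v_1 r_2 - v_2 r_1| \ge 1$, and hence
\begin{equation*}
\left|\frac{v_1}{r_1} - \frac{v_2}{r_2}\right| = \frac{|v_1 r_2 - v_2 r_1|}{r_1 r_2} \ge \frac{1}{R^2}.
\end{equation*}

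The second step is to appeal to the Montgomery--Vaughan analytic inequality: for any $\delta$-well-spaced real numbers $\alpha_1, \ldots, \alpha_J$ on $\R/\Z$ and any complex sequence $(a_h)_{h=1}^H$,
\begin{equation*}
\sum_{j=1}^J \left|\sum_{h=1}^H a_h\, \e(h\alpha_j)\right|^2 \le (H + \delta^{-1}) \sum_{h=1}^H |a_h|^2.
\end{equation*}
Applying this with $\delta = R^{-2}$ and the $\alpha_j$ taken to be the Farey fractions above (with the $c_{r,v}$ playing the role of the test sequence in the dual formulation) produces exactly the required bound $(R^2 + H)\sum_{r,v} |c_{r,v}|^2$.

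The main obstacle is the Montgomery--Vaughan inequality itself. Its classical proofs either invoke Beurling--Selberg extremal majorants to extend the discrete sequence $(a_h)$ to a function on $\R$ and then apply Poisson summation, or use Hilbert's inequality to control the off-diagonal terms arising from the expansion of $\sum_j|T(\alpha_j)|^2$. Since this inequality is classical and appears exactly in the cited source, I would treat it as a black box, so that the substantive content of the proof reduces to the duality reformulation and the elementary spacing verification above.
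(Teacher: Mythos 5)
Your outline is correct in substance, but there is nothing in the paper to compare it against: the authors do not prove this lemma, they simply quote it as the classical large sieve inequality, citing Iwaniec--Kowalski, Theorem~7.11. Your derivation --- the Farey fractions $v/r$ with $r\le R$ and $\gcd(v,r)=1$ are pairwise separated by at least $1/(r_1r_2)\ge R^{-2}$ modulo $1$, and the Montgomery--Vaughan analytic large sieve over $\delta$-well-spaced points with $\delta=R^{-2}$ then gives the bound $(H+R^2)A$ --- is exactly the standard proof, and treating the analytic inequality as a black box is reasonable since that is where all the work lies. One small criticism of the write-up: the duality detour is superfluous and slightly muddles the argument. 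The Montgomery--Vaughan inequality as you state it is already in the primal form demanded by the lemma (a sum over the well-spaced points of $|T(\alpha_j)|^2$, with coefficients indexed by $h$), so you can apply it directly; instead you announce that you will prove the dual form, then quote the primal inequality and speak of the $c_{r,v}$ as "the test sequence," conflating the two equivalent formulations. Either drop the duality step entirely, or, if you insist on proving the dual form, apply the adjoint version of the analytic inequality consistently.
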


The link between multiplicative characters and exponential sums is given 
by the following well-known identity (see~\cite[Equation~(3.12)]{IwKow}) 
involving Gauss sums
$$
\tau_k(\chi) = \sum_{v=1}^k \chi(v) \e(v/k)
$$
defined for a character $\chi$ modulo an integer $k\ge 1$:

\begin{lemma}
\label{lem:tau chi}
For any primitive multiplicative character $\chi$ modulo $k$ and an integer $b$ with 
$\gcd(b,k) = 1$,  we have
$$
\chi(b) \tau_k( \overline\chi) = 
\sum_{\substack{v=1\\ \gcd(v,k) =1}}^{k}  \overline\chi(v) \e(bv/k), 
$$
where $\overline\chi$ is the complex conjugate character to $\chi$. 
\end{lemma}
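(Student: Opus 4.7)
The plan is to prove the identity by a simple change of variable in the defining sum for the Gauss sum, exploiting the fact that $b$ is a unit modulo $k$ and that $\chi$ (hence $\overline\chi$) vanishes on integers not coprime to $k$.

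First I would start from the defining formula
$$
\tau_k(\overline\chi) = \sum_{v=1}^{k} \overline\chi(v) \e(v/k),
$$
and observe that, since $\overline\chi$ is a Dirichlet character modulo $k$, the summand vanishes whenever $\gcd(v,k) > 1$. Hence the sum can be restricted to residues $v$ with $\gcd(v,k)=1$ without changing its value. Multiplying both sides by $\chi(b)$ (which is a nonzero complex number because $\gcd(b,k)=1$), I obtain
$$
\chi(b)\,\tau_k(\overline\chi) = \sum_{\substack{v=1\\ \gcd(v,k)=1}}^{k} \chi(b)\,\overline\chi(v)\, \e(v/k).
$$

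Next I would perform the substitution $v \equiv b w \pmod{k}$, where $w$ runs over a complete set of reduced residues modulo $k$. Since $\gcd(b,k)=1$, the multiplication-by-$b$ map is a bijection on $(\Z/k\Z)^*$, so $w$ also ranges exactly over the reduced residues modulo $k$. Under this substitution, $\e(v/k) = \e(bw/k)$, and multiplicativity together with $\chi(b)\overline\chi(b) = 1$ gives
$$
\chi(b)\,\overline\chi(v) = \chi(b)\,\overline\chi(b)\,\overline\chi(w) = \overline\chi(w).
$$
Substituting back, and renaming $w$ as $v$, the right-hand side becomes exactly
$$
\sum_{\substack{v=1\\ \gcd(v,k)=1}}^{k} \overline\chi(v)\, \e(bv/k),
$$
as desired. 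There is no substantial obstacle here; the only point to double-check is that the restriction to $\gcd(v,k)=1$ in the definition of $\tau_k(\overline\chi)$ is legitimate, which follows from the convention that a Dirichlet character modulo $k$ is extended by zero to integers not coprime to $k$. (If one were to consider imprimitive characters the same argument would still yield the identity, but the assumption of primitivity is what makes $\tau_k(\overline\chi)$ nonzero and hence the identity nontrivially useful in later applications.)
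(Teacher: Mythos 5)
Your proof is correct. The paper does not prove this lemma at all --- it simply cites it as the well-known identity~\cite[Equation~(3.12)]{IwKow} --- and your change-of-variables argument ($v\equiv bw \pmod k$, using that multiplication by $b$ permutes the reduced residues and that $\chi(b)\overline\chi(b)=1$) is exactly the standard textbook derivation, including the correct remark that primitivity is not needed when $\gcd(b,k)=1$.
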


By~\cite[Lemma~3.1]{IwKow} we also have:

\begin{lemma}
\label{lem:tau size}
For any primitive multiplicative character $\chi$ modulo an integer $k\ge 1$ we have
$$
|\tau_k(\chi)| = k^{1/2}.
$$
\end{lemma}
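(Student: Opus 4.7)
The plan is to compute $|\tau_k(\chi)|^2$ via an orthogonality argument, relying on the primitivity of $\chi$ through Lemma~\ref{lem:tau chi}. First I would apply that lemma with $\chi$ in place of $\overline{\chi}$, to get, for every integer $b$ with $\gcd(b,k)=1$,
$$
\overline{\chi}(b)\,\tau_k(\chi) = \sum_{\substack{v=1\\ \gcd(v,k)=1}}^{k} \chi(v)\,\e(bv/k).
$$
The key structural observation, which is where primitivity enters essentially, is that this identity in fact extends to every $b \bmod k$: when $\gcd(b,k) = d > 1$ the left-hand side vanishes since $\overline{\chi}(b)=0$, and the right-hand side also vanishes because the additive character $v\mapsto \e(bv/k)$ factors through $\Z/(k/d)\Z$ and is thus constant on cosets of the subgroup $H=\{v\in(\Z/k\Z)^*\colon v\equiv 1\pmod{k/d}\}$, on which a primitive $\chi$ modulo $k$ cannot be trivial (otherwise $\chi$ would descend to a character modulo $k/d$, contradicting primitivity).

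With this extended identity in hand, I would square moduli and sum over $b=1,\dots,k$. The left-hand side becomes $\varphi(k)\,|\tau_k(\chi)|^2$, since $|\overline{\chi}(b)|^2$ equals $1$ for $\gcd(b,k)=1$ and $0$ otherwise. On the right-hand side, expanding the square and interchanging the order of summation gives
$$
\sum_{\substack{v_1,v_2\\ \gcd(v_1,k)=\gcd(v_2,k)=1}} \chi(v_1)\,\overline{\chi}(v_2) \sum_{b=1}^{k}\e\bigl(b(v_1-v_2)/k\bigr),
$$
and by the standard additive orthogonality (the general-modulus analogue of~\eqref{eq:Orth}) the inner $b$-sum equals $k$ when $v_1\equiv v_2\pmod{k}$ and vanishes otherwise. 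This collapses the right-hand side to $k\,\varphi(k)$, and dividing by $\varphi(k)$ yields $|\tau_k(\chi)|^2 = k$, as claimed.

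The only genuinely nontrivial step is the primitivity argument extending Lemma~\ref{lem:tau chi} from $b$ coprime to $k$ to arbitrary $b$; everything else is routine bookkeeping with additive orthogonality and $|\chi(v)|=1$ on units. In a fully self-contained write-up this extension would be the main obstacle, but it can be made explicit by decomposing the inner sum along cosets of $H$ and invoking the very definition of primitivity, so the core of the proof remains the two-line orthogonality computation above.
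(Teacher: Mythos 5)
Your argument is correct. Note, however, that the paper does not actually prove this statement: it is simply quoted from~\cite[Lemma~3.1]{IwKow}, so there is no internal proof to compare against. What you have written is essentially the standard proof that underlies that citation: extend the identity of Lemma~\ref{lem:tau chi} to all residues $b$ modulo $k$ (the point where primitivity is genuinely used, via the vanishing of $\sum_{u\in H}\chi(u)$ for the nontrivial restriction of $\chi$ to $H=\{u\equiv 1 \pmod{k/d}\}$, after grouping the $v$-sum into multiplicative cosets of $H$), and then apply Parseval, i.e.\ square and sum over a complete residue system $b=1,\dots,k$, using the full-modulus analogue of~\eqref{eq:Orth} to get $\varphi(k)\,|\tau_k(\chi)|^2=k\,\varphi(k)$. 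The only step you leave slightly implicit is the final vanishing of each coset sum, which is exactly $\e\bigl(b'v/(k/d)\bigr)\chi(v)\sum_{u\in H}\chi(u)=0$; with that spelled out the proof is complete and self-contained, which is more than the paper offers for this lemma.
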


\subsection{Bounds for almost moduli} 

We  use some ideas of Garaev~\cite[Theorem~10]{Gar}, which we
adapt to our purposes and specific relations between the 
parameters.

\begin{lemma}
\label{lem:Garaev}
For $Q = X^{1/2 + o(1)}$, we have
$$
\sum_{k \in [X,2X]} \max_{\chi\in \cX_k^*} \max_{t \le Q}
\left|S_k(\chi;t)\right| ^{8} \ll X^{4+o(1)} .
$$
\end{lemma}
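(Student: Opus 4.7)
The plan is to remove the $\max_{t\le Q}$ by a Rademacher-type binary decomposition of the partial sum and then apply the multiplicative large sieve to a $t$-independent family of character sums over dyadic intervals. For any integer $t\le Q$, writing $t=\sum_i 2^{j_i}$ in binary with $j_1>j_2>\ldots$ decomposes $[1,t]$ as a disjoint union of at most $\lceil\log_2 Q\rceil+1$ intervals $J_i(t)$, each of the form $[m\cdot 2^{j_i}+1,\,(m+1)\cdot 2^{j_i}]$ for some $m$. All such intervals lie in the $t$-independent family $\cI=\bigcup_{j\le \log_2 Q}\cI_j$ of ``aligned'' dyadic intervals inside $[1,Q]$, with $|\cI_j|\ll Q/2^j$. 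Setting $B_I(\chi)=\sum_{n\in I}\chi(n)$ and using the power-mean inequality on the at most $\log Q+1$ summands,
$$|S_k(\chi;t)|^8 = \Bigl|\sum_i B_{J_i(t)}(\chi)\Bigr|^8 \le (\log Q+1)^7 \sum_{I\in\cI}|B_I(\chi)|^8,$$
which is a $t$-independent bound. Replacing $\max_\chi$ by $\sum_{\chi\in\cX_k^*}$ then reduces Lemma~\ref{lem:Garaev} to proving
$$\sum_{k\in[X,2X]}\sum_{\chi\in\cX_k^*}\sum_{I\in\cI}|B_I(\chi)|^8 \ll X^{4+o(1)}.$$

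For each fixed $I\in\cI$ of length $L=2^j\le Q$, expand $B_I(\chi)^4=\sum_m R_I(m)\chi(m)$, where $R_I(m)=\#\{(n_1,\ldots,n_4)\in I^4:\,n_1 n_2 n_3 n_4=m\}$ is supported on $m\le (2L)^4$. Combining Lemmas~\ref{lem:tau chi} and~\ref{lem:tau size} with Lemma~\ref{lem:Large Sieve} in the standard way (write $B_I(\chi)^4=\tau_k(\bar\chi)^{-1}\sum_v\bar\chi(v) T_I(v/k)$ with $T_I(\alpha)=\sum_m R_I(m)e(m\alpha)$, take absolute values squared, sum over primitive $\chi$ using orthogonality, then apply Lemma~\ref{lem:Large Sieve} with $R=2X$) produces the multiplicative large sieve bound
$$\sum_{k\in[X,2X]}\sum_{\chi\in\cX_k^*}|B_I(\chi)|^8 \ll (X^2+L^4)\sum_m R_I(m)^2.$$
The diagonal count $\sum_m R_I(m)^2=\#\{(\vec n,\vec m)\in I^4\times I^4:\prod n_i=\prod m_i\}$ is at most the corresponding count with $n_i,m_i\le 2L$, which Lemma~\ref{lem:count} (with $k=4$, all $\gamma_i=1$, $T=2L$) bounds by $\ll L^4(\log L)^9$.

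Since $L\le Q=X^{1/2+o(1)}$ gives $X^2+L^4\ll X^{2+o(1)}$, summing over the $\ll Q/L$ intervals of each length $L$ and then over dyadic $L\le Q$ yields
$$\sum_{I\in\cI}\sum_{k,\chi}|B_I|^8 \ll X^{2+o(1)}\sum_{j\le \log_2 Q} \frac{Q}{2^j}\cdot 2^{4j}(\log Q)^9 \ll X^{2+o(1)}\cdot Q^4 (\log Q)^{O(1)} \ll X^{4+o(1)},$$
using the geometric sum $\sum_{j\le\log_2 Q}2^{3j}\ll Q^3$ and $Q^4=X^{2+o(1)}$. Absorbing the $(\log Q)^7$ factor from the dyadic step into the $o(1)$ exponent concludes the proof.

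The main obstacle is the simultaneous presence of $\max_\chi$ and $\max_{t\le Q}$ inside the sum over $k$. For any single fixed $t$, the large sieve already delivers $\sum_{k,\chi}|S_k(\chi;t)|^8\ll X^{4+o(1)}$, but a crude removal of the max by summing over $t\in\{1,\ldots,\lfloor Q\rfloor\}$ would cost an extra factor $Q=X^{1/2}$ and ruin the bound. The Rademacher-type binary decomposition sidesteps this loss by controlling the max uniformly in $t$ through a sum over only $O(Q)$ aligned intervals, and the support-size factor $L^4$ coming from $R_I$ exactly balances the interval count $Q/L$ when summed dyadically, which is precisely why the choice $Q=X^{1/2+o(1)}$ is the natural threshold in this argument.
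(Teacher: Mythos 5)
Your proposal is correct, and it reaches the large sieve by a genuinely different route from the paper at the one step that matters: removing the maximum over $t$. The paper completes the incomplete sum $S_k(\chi;t)$ additively modulo $Q$ (orthogonality~\eqref{eq:Orth} plus the geometric-sum bound~\eqref{eq:Incompl}), pays a $(\log Q)^7$ via H\"older over the frequency $b$, and is left with a single complete twisted sum of length $Q$, whose fourth power is a Dirichlet polynomial of length $Q^4$ fed into the large sieve through Gauss sums. You instead absorb the maximum over $t$ combinatorially, by the Rademacher--Menshov binary decomposition of $[1,t]$ into $O(\log Q)$ aligned dyadic blocks, pay the same $(\log Q)^7$ via the power-mean inequality, and then run the identical Gauss-sum/large-sieve step separately on each block length $L$; the key point, which you correctly identify, is that the diagonal contribution $L^4$ per block exactly offsets the $Q/L$ blocks of that length, so the dyadic summation telescopes to $Q^4=X^{2+o(1)}$ and nothing is lost relative to the paper's single application with $H=Q^4$. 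Your version is arguably more elementary on the completion side (no exponential-sum estimate is needed) at the cost of handling a family of polynomials rather than one. Two minor points of hygiene: (i) for the diagonal you invoke Lemma~\ref{lem:count}, which is an asymptotic as $T\to\infty$ and hence not literally an upper bound for bounded $L$; the divisor-function bound $R_I(m)\le d_4(m)=m^{o(1)}$ together with $\sum_m R_I(m)=L^4$ gives $\sum_m R_I(m)^2\ll L^{4+o(1)}$ uniformly and is what the paper uses; (ii) as in the paper itself, the identity of Lemma~\ref{lem:tau chi} is applied to arguments $h$ not necessarily coprime to $k$, which is legitimate because $\chi$ is primitive, but worth a word.
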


\begin{proof} We follow the ideas of Garaev~\cite[Theorem~3]{Gar}. 

For each $k\in [X, 2X]$ we choose a primitive multiplicative character $\chi_k$  modulo $k$
and $t_k \le Q$ 
such that with the largest values of 
$$
|S_k(\chi_k;t_k)|= \max_{\chi\in \cX_k^*} \max_{t \le Q}
\left|S_k(\chi;t)\right|
$$

Without loss of generality we can assume that $Q=2M+1$ is an 
odd integer. Then using~\eqref{eq:Orth}, for $t_k \le Q$ we write
\begin{eqnarray*}
S_k(\chi_k;t_k) &=&
 \sum_{m=1}^Q  \chi_k(m)\frac{1}{Q} \sum_{n=1}^{t_k} \sum_{b=-M}^{M} \e(b(m-n)/Q) \\
&=& \frac{1}{Q} \sum_{b=-M}^{M}   \sum_{n=1}^{t_k} \e(-bn/Q)
 \sum_{m=1}^Q  \chi_k(m) \e(bm/Q). 
\end{eqnarray*}
Recalling~\eqref{eq:Incompl}, we derive
$$
S_k(\chi_k;t_k)  \ll
 \sum_{b=-M}^{M} \frac{1}{|b|+1}   
\left|\sum_{m=1}^Q  \chi_k(m) \e(bm/Q)\right| .
$$
Therefore, writing 
$$
|b|+1 = \(|b|+1\)^{7/8} \(|b|+1\)^{1/8},
$$
 the H{\"o}lder inequality yields the bound
 \begin{equation}
\label{eq:Ub}
\sum_{k \in [X,2X]} \left|S_k(\chi_k;t_k) \right|^{8} \ll
(\log Q)^{7} \sum_{b=-M}^{M} \frac{1}{|b|+1}  U_b, 
\end{equation} 
where
$$
U_b =  \sum_{k \in [X,2X]} \left|\sum_{m=1}^Q  \chi_k(m) \e(bm/Q)\right|^{8}.
$$
We now note that 
$$
\(\sum_{m=1}^Q  \chi_k(m) \e(bm/Q)\)^4 = \sum_{h=1}^{H} \rho_{b}(h)  \chi_k(h) , 
$$
 where $H = Q^4$ and 
$$
\rho_{b}(h) = \sum_{\substack{m_1,m_2,m_3,m_4=1\\ m_1m_2m_3m_4 = h}}^Q 
\e(b( m_1+m_2+m_3 + m_4)/Q). 
$$
Using Lemma~\ref{lem:tau chi}, we  write
$$
\(\sum_{m=1}^Q  \chi_k(m) \e(bm/Q)\)^4 = \sum_{h=1}^{H} \rho_{b}(h)  
 \frac{1}{\tau_{k}( \overline\chi_k)}\sum_{\substack{v=1\\ \gcd(v,k) =1}}^{k} 
  \overline\chi_k(v) \e(hv/k).
$$
Changing the order of summation, 
by Lemma~\ref{lem:tau size} and the Cauchy inequality, we obtain,
$$ 
\left|\sum_{m=1}^M  \chi_k(m) \e(bm/Q)\right|^{8} \le
\sum_{\substack{v=1\\ \gcd(v,k) =1}}^{k} \left| \sum_{h=1}^{H} \rho_{b}(h) \e(hv/k)\right|^2.
$$
Therefore
$$
U_b \le   \sum_{k \in [X,2X]} \sum_{\substack{v=1\\ \gcd(v,k) =1}}^{k}\left| \sum_{h=1}^{H} \rho_{b}(h) \e(hv/k)\right|^2.
$$
Using the standard upper bound on the divisor function, see, 
for example,~\cite[Bound~(1.81)]{IwKow}, 
we conclude that 
$$
|\rho_{b}(h)| \le  \sum_{m_1m_2m_3m_4 = h} 1 =  h^{o(1)}
$$ 
as $h \to \infty$. 
Hence, we now derive from Lemma~\ref{lem:Large Sieve}
$$
U_b\le  \(X^2+ H \) H X^{o(1)} \le \(X^2+ Q^4 \) Q^4 X^{o(1)}\le X^{4+o(1)} , 
$$
which after substitution in~\eqref{eq:Ub} implies 
$$
\sum_{k \in [X,2X]}
 \left|S_k(\chi_k;t_k) \right|^{8} \ll X^{4+o(1)} 
$$
and concludes the proof.  
\end{proof} 


\section{Upper Bounds}
\subsection{Proof of Theorem~\ref{thm:indiv}}
\label{sec:indiv}

We first obtain a bound on 
for almost all primes in dyadic intervals.

\begin{lemma}
\label{lem:dyadic}
Let $X\geq 1$ be a sufficient large real number. For any   $\eta \in \{0,1\}$
we have 
$$
\sum_{p \in [X,2X]} \max_{\chi\in \cX_p^*} \left|\varTheta_p(\eta, 1,\chi)\right|^8  \le X^{4\eta+4+o(1)}.
$$
\end{lemma}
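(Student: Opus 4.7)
The plan is to reduce this dyadic maximum bound to Lemma~\ref{lem:Garaev} via truncation of the Gaussian tail and Abel summation, which transfers control of the theta values to control of partial character sums $S_p(\chi;t)$.

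First I would fix a small $\varepsilon>0$, set $N=\fl{p^{1/2+\varepsilon}}$, and truncate the series defining $\varTheta_p(\eta,1,\chi)$ at $n=N$. Using the elementary splitting $e^{-\pi n^2/p}\le e^{-\pi p^{2\varepsilon}/2}\,e^{-\pi n^2/(2p)}$ for $n>N$, the tail is $O\(p^{(1+\eta)/2}e^{-\pi p^{2\varepsilon}/2}\)$, smaller than any fixed negative power of $p$. To the truncated sum $\sum_{n\le N}\chi(n)w(n)$ with weight $w(t)=t^\eta e^{-\pi t^2/p}$, I would apply Abel summation to obtain
$$
\Bigl|\sum_{n\le N}\chi(n)w(n)\Bigr|\le\Bigl(\max_{t\le N}|S_p(\chi;t)|\Bigr)\Bigl(|w(N)|+\sum_{n=1}^{N-1}|w(n+1)-w(n)|\Bigr).
$$

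The key step is to bound the discrete total variation of $w$ by $\int_1^N|w'(t)|\,dt$. Since
$$
w'(t)=\(\eta t^{\eta-1}-\frac{2\pi t^{\eta+1}}{p}\)e^{-\pi t^2/p}
$$
has at most one sign change, at $t^\star=\sqrt{\eta p/(2\pi)}$, the variation is at most $2\sup_{t>0}w(t)\ll p^{\eta/2}$. Combined with $|w(N)|\ll 1$, this gives
$$
|\varTheta_p(\eta,1,\chi)|\ll p^{\eta/2}\max_{t\le N}|S_p(\chi;t)|
$$
uniformly for $\chi\in\cX_p^*$, up to the negligible tail.

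Raising to the eighth power, summing over primes $p\in[X,2X]$, and noting that for prime $p$ every non-principal character modulo $p$ is automatically primitive (so our $\cX_p^*$ agrees with the set of primitive non-principal characters appearing in Lemma~\ref{lem:Garaev}), I would enlarge the outer sum to all integers $k\in[X,2X]$. Applying Lemma~\ref{lem:Garaev} with $Q=\fl{X^{1/2+\varepsilon}}=X^{1/2+o(1)}$ then contributes $X^{4+o(1)}$, while the prefactor $p^{4\eta}\le (2X)^{4\eta}$ contributes $X^{4\eta}$, producing the claimed bound $X^{4\eta+4+o(1)}$. The only routine subtlety is the variation estimate in the odd case $\eta=1$, where $w$ peaks at $t^\star\asymp p^{1/2}$ with height $\asymp p^{1/2}$; but the single-sign-change structure of $w'$ disposes of this immediately, so no analytic input beyond Lemma~\ref{lem:Garaev} is required.
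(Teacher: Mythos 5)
Your proposal is correct and follows essentially the same route as the paper: truncate the theta series at height $p^{1/2+o(1)}$, use partial (Abel) summation to reduce to $|\varTheta_p(\eta,1,\chi)|\ll p^{\eta/2+o(1)}\max_{t\le Q}|S_p(\chi;t)|$, and then invoke Lemma~\ref{lem:Garaev}. The only cosmetic differences are that you use discrete Abel summation with a total-variation bound on $w(t)=t^{\eta}e^{-\pi t^2/p}$ where the paper uses the continuous partial-summation integral (picking up harmless logarithmic factors), and that you should either take the truncation point to be $p^{1/2+o(1)}$ (e.g.\ $p^{1/2}\log p$) or let your fixed $\varepsilon$ tend to $0$ at the end so that Lemma~\ref{lem:Garaev} is applied with $Q=X^{1/2+o(1)}$ as stated.
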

 
\begin{proof} 
Using partial summation, we obtain
$$\varTheta_p(\eta, 1,\chi)=\frac{2\pi}{p}\int_{1}^{+\infty} 
\(t^{1+\eta}+\eta\frac{p}{2\pi}\)S_p(\chi;t)e^{-\pi t^2/p} dt.
$$
where $S_p(\chi;t)$ is given by~\eqref{eq:Sums St}. 

First, we remark that it suffices to bound the above integral 
for $t\leq 2(X\log X)^{1/2}$. Indeed, we bound the tail for $t\ge 2(X\log X)^{1/2}$ trivially, 
using that we always have $|S_p(\chi;t)| \le p$, 
\begin{equation}
\label{eq:Tail}
\begin{split}
\frac{2\pi}{p} \int_{2(X\log X)^{1/2}}^{+\infty} & \(t^{1+\eta}+\eta\frac{p}{2\pi}\)
  S_p(\chi;t)e^{-\pi t^2/p} dt \\
& \ll  \int_{2(X\log X)^{1/2}}^{+\infty} t^{3}e^{-\pi t^2/p} dt  = I_1 + I_2  
\end{split}
\end{equation}
where 
$$
I_1 = \int_{2(X\log X)^{1/2}}^{X}t^{3}e^{-\pi t^2/p} dt \mand
I_2 = \int_{X}^{+\infty} t^{3}e^{-\pi t^2/p} dt.
$$ 
We now have the following elementary estimates:
\begin{equation*}
\begin{split}
I_1 &= X^3 \int_{2(X\log X)^{1/2}}^{X}e^{-\pi t^2/p} dt \ll 
X^3 \exp\(-\pi \frac{4 X \log X}{p}\)\\
& \qquad \qquad \qquad \qquad \qquad \qquad \qquad 
\qquad 
 \le X^3 \exp\(-\pi \log X\) \ll 1,\\
I_2 &\le  \int_{X}^{+\infty} e^{-0.5 \pi t^2/p} dt  \ll 1.
\end{split}
\end{equation*}

Substituting the above estimates of $I_1$ and $I_2$
in~\eqref{eq:Tail}, we now conclude 
\begin{equation*}
\begin{split}
\varTheta_p(\eta&, 1,\chi)\\  &\ll 
X^{-1} \int_1^{2(X\log X)^{1/2}}\(t^{1+\eta}+\eta\frac{p}{2\pi}\)
  |S_p(\chi;t)|e^{-\pi t^2/p} dt+ 1\\
  & \ll 
X^{-1}  \max_{t \le 2(X\log X)^{1/2}}\left|S_p(\chi;t)\right|
\int_1^{2(X\log X)^{1/2}}\(t^{1+\eta}+\eta\frac{p}{2\pi}\) dt+ 1.
\end{split}
\end{equation*}

Hence, 
\begin{equation}
\label{eq:prelim1}
\begin{split}
\varTheta_p(\eta, 1,\chi)  &\ll X^{-1}(X^{1/2}\log X)
((X \log X)^{(1+\eta)/2}+ \eta X) \\
& \qquad \qquad\qquad \qquad\qquad  \max_{t \le 2(X\log X)^{1/2}}\left|S_p(\chi;t)\right| + 1.
\end{split}
\end{equation}
We now remark that for $\eta \in \{0,1\}$ we have 
$$
\eta X\le   (X \log X)^{(1+\eta)/2}.
$$
Hence, we derive from~\eqref{eq:prelim1}  that 
\begin{equation}
\label{eq:prelim2}
\begin{split}
\varTheta_p(\eta, 1,\chi)  &\ll X^{-1} (X^{1/2}\log X)^{(3+\eta)/2} 
\max_{t \le 2(X\log X)^{1/2}}\left|S_p(\chi;t)\right| + 1\\
& = X^{\eta/2}  (\log X)^{(3+\eta)/2} \max_{t \le 2(X\log X)^{1/2}}\left|S_p(\chi;t)\right|
\end{split}
\end{equation}
(clearly the term $1$ can be dropped as, for example,  $S_p(\chi;1)=1$).
We now see from~\eqref{eq:prelim2} that 
\begin{equation*}
\begin{split}
\sum_{p \in [X,2X]} \max_{\chi\in \cX_p^*} & \left|\varTheta_p(\eta, 1,\chi)\right|^8  \\
& \le X^{4 \eta+o(1)}  \sum_{p \in [X,2X]} \max_{\chi\in \cX_p^*}  
\max_{t \le 2(X\log X)^{1/2}}\left|S_p(\chi;t)\right|^8, 
\end{split}
\end{equation*}
and using Lemma~\ref{lem:Garaev}  we conclude the proof.
 \end{proof}

We deduce Theorem~\ref{thm:indiv} from Lemma~\ref{lem:dyadic} by splitting $[1,X]$ in dyadic intervals $]X/2^{k+1},X/2^{k}]$.

\subsection{Proof of Theorem~\ref{thm:moment}}
\label{sec:moment}

For even characters we write, 
\begin{equation*}
\begin{split}
\sum_{p \le X}  \sum_{\chi\in \cX_p^+\backslash \chi{_0}}&\left|\vartheta_p (1,\chi)\right| ^{2k} \\
&\le \sum_{p \le X}   \max_{\chi\in \cX_p^+\backslash \chi{_0}}  \left|\vartheta_p(1,\chi)\right|^{2k-4} \sum_{\chi\in \cX_p^+\backslash \chi{_0}}\left|\vartheta_p (1,\chi)\right| ^{4}.
\end{split}
\end{equation*}
Using~\eqref{eq:moment}, we obtain,
$$
\sum_{p \le X}  \sum_{\chi\in \cX_p^+\backslash \chi{_0}}\left|\vartheta_p (1,\chi)\right| ^{2k} \le X^{2+o(1)} \sum_{p \le X}   \max_{\chi\in \cX_p^+\backslash \chi{_0}}  \left|\vartheta_p(1,\chi)\right|^{2k-4}.
$$
Finally, since $3 \le k \le 6$ then by the H{\"o}lder inequality
\begin{equation*}
\begin{split}
\sum_{p \le X}   \max_{\chi\in \cX_p^+\backslash \chi{_0}} & 
\left|\vartheta_p(1,\chi)\right|^{2k-4}\\
& \le \(\sum_{p \le X} 1\)^{(6-k)/4} 
\(\sum_{p \le X}   \max_{\chi\in \cX_p^+\backslash \chi{_0}} 
 \left|\vartheta_p(1,\chi)\right|^{8}\)^{(k-2)/4}, 
\end{split}
\end{equation*}
and using Theorem~\ref{thm:indiv}, after simple calculations
we obtain the result for the even characters. A similar argument 
also implies the desired estimate for the odd characters.

\subsection{Proof of Theorem~\ref{thm:moment a.a.}}
\label{sec:moment a.a.}

For $k=1$ and $k=2$ the result is contained in~\eqref{eq:moment}.
 Let $X\geq 1$ be a sufficient large real number. For all but $o(X/\log X)$ 
primes $ p \leq X$, we see from Corollary~\ref{cor:indiv} that for any $\chi\in \cX_p$ 
we have 
$$
 \max_{\chi\in \cX_p^+\backslash \chi{_0}} 
 \left|\vartheta_p(1,\chi)\right| \le p^{3/8+o(1)}.
$$
So for these primes, for $k\ge 3$, we have
$$
 \sum_{\chi\in \cX_p^+\backslash \chi{_0}}\left|\vartheta_p (1,\chi)\right| ^{2k} 
\le  p^{3k/4+o(1)}  \sum_{\chi\in \cX_p^+\backslash \chi{_0}}
\left|\vartheta_p (1,\chi)\right| ^{4} .
$$
Using~\eqref{eq:moment}, we obtain the result for the even characters. A similar argument 
also implies the desired estimate for the odd characters.

%
%
%
%
%
%
%
%
%
%
%

\section{Concluding remarks}

We remark that under the Generalised Riemann Hypothesis, 
instead of Lemma~\ref{lem:Garaev} we can use 
the well-known bound 
\begin{equation}
\label{eq:GRH Bound}
\max_{\chi\in \cX_p^*} \left|S_p(\chi;t)\right| \le t^{1/2}p^{o(1)} ,
\end{equation} 
see~\cite[Section~1]{MoVau}; 
it can also be  derived   from~\cite[Theorem~2]{GrSo}. 

Hence, substituting~\eqref{eq:GRH Bound} in~\eqref{eq:prelim2} (with $X=p$) we obtain a more realistic individual bound
$$
\varTheta_p(\eta, 1,\chi) \le   p^{1/4+\eta/2 +o(1)}. 
$$ It is worthwhile to notice that this is consistent with the asymptotic conjectural formula (\ref{eq:Conj TS}).

%

\section*{Acknowledgements}

The authors are grateful to Driss Essouabri, Youness Lamzouri, Stephane Louboutin and 
Maksym Radziwi\l\l\ for very helpful discussions.

 During the preparation of this work M.~Munsch was supported by a postdoctoral grant in CRM, Montreal, under the supervision of Andrew Granville and Dimitris Koukoulopoulos
and I.~E.~Shparlinski was supported in part by the Australian Research Council
Grant DP140100118.

\end{document}